\numberwithin{equation}{section}
\theoremstyle{plain}
\newtheorem{theorem}{Theorem}[section]
\newtheorem{lemma}[theorem]{Lemma}
\newtheorem{corollary}[theorem]{Corollary}
\newtheorem{proposition}[theorem]{Proposition}
\theoremstyle{definition}
\newtheorem{example}[theorem]{Example}
\newtheorem{remark}[theorem]{Remark}
\newtheorem{definition}[theorem]{Definition}
\def\0{{\bf 0}}
\def\G{\mathbb{G}}
\def\NN{\mathbb{N}}
\def\R{\mathcal{R}}
\def\ZZ{\mathbb{Z}}
\def\RR{\mathbb{R}}
\def\QQ{\mathbb{Q}}
\def\P{\mathcal{P}}
\def\x{{\bf x}}
\DeclareMathOperator{\HF}{HF}
\DeclareMathOperator{\supp}{Supp}
\DeclareMathOperator{\tor}{Tor}
\DeclareMathOperator{\pos}{Pos}
\DeclareMathOperator{\en}{end}
\begin{document}

\author{Amir Bagheri}
\author{Kamran Lamei}

\title{Graded Betti numbers of powers of ideals}

\keywords{Betti numbers, Nonstandard Hilbert function, Vector partition function}
\subjclass[2000]{13A30, 13D02, 13D40}
\thanks{The research of Amir Bagheri was in part supported by a grant from IPM (No. 93130024).}
\begin{abstract}

Using the concept of vector partition functions,
we investigate the asymptotic behavior of
graded Betti numbers of powers of homogeneous ideals in a polynomial
ring over a field.
Our main results state that if the polynomial ring is equipped with a positive $\ZZ$-grading, then the Betti
numbers of powers of ideals are encoded by finitely many polynomials.

More precisely, in the case of $\ZZ$-grading, $\ZZ^2$
can be splitted into  a finite number of regions such that each region corresponds to a polynomial that
depending to the degree $(\mu, t)$,
$\dim_k \left(\tor_i^S(I^t, k)_{\mu} \right)$
is equal to one of these polynomials  in $(\mu, t)$.
This refines, in a graded situation, the result of Kodiyalam on Betti numbers of powers of ideals in \cite{Ko1}.

Our main statement treats the case of a power products of homogeneous ideals  in a $\ZZ^d$-graded  algebra, for a positive grading, in the sense of \cite{ms}.

\end{abstract}

\maketitle


\section{Introduction}
The study of homological invariants of powers of ideals goes back, at least,
 to the work of Brodmann in the 70's and attracted a lot of attention
 in the last two decades.

One of the most important results in this area is the result on the
asymptotic linearity of
Castelnuovo-Mumford regularity obtained by
Kodiyalam \cite{Ko} and  Cutkosky, Herzog and Trung \cite{CHT}, independently.
The proof of Cutkosky, Herzog and Trung
further describes the eventual linearity in $t$ of $\en\left(\tor_i^S(I^t, k)\right) :=$ max$\{\mu | \tor_i^S(I^t, k)_{\mu} \neq 0 \} $.

 It is natural to concern the asymptotic behavior
of Betti numbers $\beta_i (I^t):=\dim_k \tor_i^S(I^t, k)$ as $t$ varies. In  \cite{NR}, Northcott and Rees already investigated the asymptotic behavior of
$\beta_1^k(I^t)$. Later, using the Hilbert-Serre theorem,
Kodiyalam \cite[Theorem 1]{Ko1} proved that for any non-negative integer $i$ and
sufficiently large $t$, the $i$-th Betti number, $\beta_i^{k}(I^t)$,
is a polynomial $Q_i$ in $t$ of degree at most the analytic spread of $I$ minus one.

Recently, refining the result of \cite{CHT} on $\en\left(\tor_i^S(I^t, k)\right)$, \cite{BCH} gives a precise picture of the set of degrees $\gamma$
such that $\tor_i^S(I^t, A)_\gamma \not= 0$ when $t$ runs over $\NN$.
In \cite{BCH}, the authors consider  a polynomial ring $S=A[x_1, \ldots, x_n]$ over a Noetherian ring $A$ graded by a finitely
generated abelian group $G$ (see \cite[Theorem 4.6]{BCH}).

When $A=k$ is a field and the ideal is generated in a single degree $d\in G$, it is  proved that for any
$\gamma$ and any $j$, the function
 $$\dim_k\tor_i^S(I^t, k)_{\gamma +td}$$
is a  polynomial in $t$ for $t\gg 0$ (see \cite[Theorem 3.3]{BCH} and \cite{Si}).

We are here interested in the behavior of $\dim_k\tor_i^S(I^t, k)_{\gamma}$
 when $I$ is an arbitrary graded ideal and
$S=k[x_1, \ldots, x_n]$ is a $\ZZ^p$-graded polynomial
ring over a field $k$, for a positive grading in the sense of \cite{ms}.

In the case of a positive $\ZZ$-grading, our result takes the following form:

\textbf{Theorem}(See Theorem \ref{main res}).
\emph{Let $S=k[x_1, \ldots, x_n]$ be a positively graded polynomial ring over
a field $k$ and let $I$ be a homogeneous ideal in $S$.
}
\emph{There exist,
$t_0,m,D\in \ZZ$, linear functions $L_i(t)=a_i t+b_i$,
for $i=0,\ldots ,m$, with $a_i$ among the degrees
of the minimal generators of $I$ and $b_i\in \ZZ$, and polynomials $Q_{i,j}\in \QQ [x,y]$ for
$i=1,\ldots ,m$ and $j\in 1,\ldots ,D$, such that, for $t\geq t_0$,
}\\
\emph{(i) $L_i(t)<L_j(t)\ \Leftrightarrow\ i<j$,\\
(ii) If $\mu <L_0(t)$ or $\mu >L_m(t)$, then $\tor_i^S(I^t, k)_{\mu}=0$.\\
(iii)  If $L_{i-1} (t)\leq \mu \leq L_{i}(t)$ and
$a_i t-\mu \equiv j\mod (D)$, then
$$\dim_k\tor_i^S(I^t, k)_{\mu}=Q_{i,j}(\mu ,t).
$$}

Our general result, Theorem \ref{tor-general case}, involves a finitely
generated graded module $M$ and a finite collection of graded ideals
$I_1,\ldots ,I_s$. The grading is a positive $\ZZ^p$-grading and a
special type of finite decomposition of $\ZZ^{p+s}$ is described in
such a way that in each region $\dim_k (\tor_i^S(MI_1^{t_1}...I_s^{t_s}, k)_{\gamma })$
is a polynomial in $(\gamma ,t_1,\ldots ,t_s)$ (i. e. is a quasi-polynomial) with respect to a lattice defined in terms of the
degrees of generators of the ideals.

The central object in this study is the Rees modification $M\R_I$. This graded
object admits a graded free resolution over a polynomial extension of $S$
from which we deduce a  $\ZZ^{p+s}$-grading  on Tor modules as in
\cite{BCH}. Investigating Hilbert series of modules for such a grading,
using vector partition functions, leads to our results.

This paper, is organized as follows. In the next Section, we provide
some definitions and terminology that we will need.
In Section 3, we will discuss Hilbert functions of non-standard
graded rings and in the last section, we prove the main theorem.

\section{Preliminaries} \label{sec.prel}

In this section, we are going to collect some necessary notations and terminology used in the paper.
For basic facts in commutative algebra, we refer the reader to \cite{Ei, bh}.

\subsection{Hilbert series}
In this part we will recall the definition and some important properties of Hilbert functions and Hilbert series.

Let $S=k[x_1, \ldots, x_n]$ be a polynomial ring over field $k$. We first make clear our definition of grading.

\begin{definition}
Let $G$ be an abelian group.  A $G$-grading of $S$ is a group homomorphism $\deg : \ZZ^n \longrightarrow G $. We set
$\deg(x^{u}) :=\deg(u)$ for a monomial $x^{u}= x_{1}^{u_1}... x_{n}^{u_n} \in S $. An element
$\sum c_u x^{u}\in S$ is homogeneous of degree $\mu\in G$ if $\deg (u)=\mu$ whenever $c_u\not= 0$.
 The set of elements of degree $\mu$ in $S$, $S_\mu$, is called the homogeneous component of $S$ in degree $\mu$.
The grading is called positive if furthermore $G$ is torsion-free and $S_0=k$.

We also recall that for any $\mu\in G$, $S(\mu)$ is again a $G$-graded ring such that $S(\mu)_n=S_{\mu+n}$. Obviously it is isomorphic to $S$ and it is obtained just by a shift on $S$.
 \end{definition}
From now on, $S$ will be a positively graded polynomial ring.
Criterions of positivity are  given in \cite[8.6]{ms}. One defines graded ideals and modules similarly to the classical $\ZZ$-graded case.
When $G=\ZZ^d$ and the grading is positive, (generalized) power series are associated to finitely generated graded modules:

\begin{remark}
By \cite[8.8]{ms},  if $S$ is positively graded by $\ZZ^d$,  then the semigoup $Q=\deg(\NN^n)$ can be embedded in $\NN^d$. Hence,  after such a change of embedding,
the above Hilbert series are  power series in the usual sense.
\end{remark}

Let $M$ be a finitely generated $\ZZ^d$-graded $S$-module. It admits a
finite minimal graded free $S$-resolution
$$\mathbb{F}_\bullet: 0\rightarrow F_u \rightarrow \ldots \rightarrow F_1\rightarrow F_0\rightarrow M\rightarrow 0.$$
Writing $$F_i=\oplus_\mu S(-\mu)^{\beta_{i,\mu}(M)},$$ the minimality
of resolution shows that  $\beta_{i,\mu}(M)=\dim_k\left(\tor_i^S(M,k)\right)_\mu$,
as the maps of $\mathbb{F}_\bullet\otimes_S k$ are zero.
We also recall that the support of a $\ZZ^d$-graded module $N$ is
$$\supp_{\ZZ^d} (N):=\{\mu\in \ZZ^d | N_\mu\neq0\}.$$
\begin{definition}
The Hilbert function of a finitely generated module $M$ over a positively graded polynomial ring is the map
$$\begin{array}{llll}
HF(M; -):&\ZZ^d&\longrightarrow& \NN\\
&\mu&\longmapsto&\dim_k(M_\mu).
\end{array}$$
Moreover, the Hilbert series of $M$ is  the power series $$H(M;{\bold t})=\sum_{{\bf \mu}\in \ZZ^d}\dim_k(M_{\mu}){\bold t}^{\mu}$$
where ${\bold t}:=(t_1, \ldots, t_d)$. Also we write ${\bold t}\gg 0$ if for all $i=1, \ldots, d$ we have $t_i\gg 0$.
\end{definition}
\begin{proposition}\label{Hilbert nonstandard}
Let $S=k[x_1, \ldots, x_n]$ be a positively graded $\ZZ^d$-graded polynomial ring over a field $k$.
Then the followings hold.
\begin{enumerate}
\item
The Hilbert series of $S$  is the development in power series of the rational function
   $$H(S(-\mu ); {\bold t})= \frac{{\bold t}^\mu}{\prod_{i=1}^n(1-{\bold t}^{\mu_i})}$$
where $\mu_i = \deg (x_i)$.

\item If $M$ is a finitely generated graded $S$-module, setting $\Sigma_M:=\cup_{\ell}\supp_{\ZZ^d}(\tor^R_\ell (M,k))$ and
$$
\kappa_M ({\bold t}):=\sum_{a\in \Sigma_M}\left( \sum_\ell (-1)^\ell \dim_k (\tor^R_\ell (M,k))_a\right) {\bold t}^a,
$$
one has $H(M;{\bold t}) = \kappa_M (t)H(S; {\bold t})$.
\end{enumerate}
\end{proposition}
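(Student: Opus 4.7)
The plan is to handle the two parts in sequence, with part (1) feeding directly into part (2).

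For (1), I would start from the observation that the monomials $\{x^{u}:u\in\NN^n\}$ form a $k$-basis of $S$ that is homogeneous for the $\ZZ^d$-grading, with $\deg(x^{u})=\sum_{i=1}^n u_i\mu_i$. Summing over this basis and factoring the exponential gives formally
\[
H(S;{\bold t})=\sum_{u\in\NN^n}{\bold t}^{\deg(u)}=\prod_{i=1}^n\sum_{u_i\geq 0}({\bold t}^{\mu_i})^{u_i}=\prod_{i=1}^n\frac{1}{1-{\bold t}^{\mu_i}}.
\]
To justify the manipulations of formal series I would invoke the remark quoted from \cite[8.8]{ms}: positivity of the grading lets us embed $Q=\deg(\NN^n)$ into $\NN^d$, so everything is an honest power series in the $t_i$'s and the geometric series $1/(1-{\bold t}^{\mu_i})$ makes sense. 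The formula for $S(-\mu)$ then follows from $\dim_k S(-\mu)_\nu=\dim_k S_{\nu-\mu}$, which multiplies the Hilbert series by ${\bold t}^\mu$.

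For (2), I would use the minimal graded free resolution $\mathbb{F}_\bullet$ of $M$. The key tool is additivity of the Hilbert function on short exact sequences of graded modules (each graded piece is a short exact sequence of finite-dimensional $k$-vector spaces). Breaking $\mathbb{F}_\bullet$ into short exact sequences, or equivalently taking the alternating sum, yields
\[
H(M;{\bold t})=\sum_{\ell\geq 0}(-1)^\ell H(F_\ell;{\bold t}).
\]
Now $F_\ell=\bigoplus_{\mu}S(-\mu)^{\beta_{\ell,\mu}(M)}$, and by the preceding paragraph stating that $\beta_{\ell,\mu}(M)=\dim_k(\tor^S_\ell(M,k))_\mu$, together with part (1), we obtain
\[
H(F_\ell;{\bold t})=\sum_\mu \dim_k(\tor^S_\ell(M,k))_\mu\,{\bold t}^\mu\cdot H(S;{\bold t}).
\]
Exchanging the two summations and pulling out the common factor $H(S;{\bold t})$ gives
\[
H(M;{\bold t})=H(S;{\bold t})\sum_{a\in\ZZ^d}\Bigl(\sum_\ell(-1)^\ell\dim_k(\tor^S_\ell(M,k))_a\Bigr){\bold t}^a=\kappa_M({\bold t})\,H(S;{\bold t}),
\]
since the inner sum vanishes outside $\Sigma_M$.

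The only point that requires a little care is to verify that the exchange of sums is legitimate: this is where positivity matters again. Since each $\tor^S_\ell(M,k)$ is a finite-dimensional $k$-space (the resolution is finite and each $F_\ell$ has finite rank), the set $\Sigma_M$ is finite and $\kappa_M({\bold t})$ is in fact a Laurent polynomial, so the expression $\kappa_M({\bold t})H(S;{\bold t})$ is a well-defined element of the power series ring after the embedding $Q\hookrightarrow\NN^d$. This is really the main (minor) obstacle; everything else is bookkeeping.
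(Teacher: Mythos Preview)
Your proposal is correct and follows exactly the route the paper intends: the paper's own proof reads in its entirety ``(1) is a simple calculation and (2) follows from the existence of graded minimal free resolutions,'' and you have simply written out that simple calculation and spelled out how the minimal free resolution gives the alternating-sum formula. There is nothing to add.
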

\begin{proof}
(1) is a simple calculation  and (2) follows from the existence of graded minimal free resolutions.
\end{proof}

\subsection{Vector partition function}
We first recall the definition of quasi-polynomials. Let $d\geq 1$ and $\Lambda$ be a
lattice in $\ZZ^d$.
\begin{definition}\cite{Bar}
A function $f:\ZZ^d\to\QQ$ is a quasi-polynomial with respect to $\Lambda$
if there exists a list of polynomials $Q_i\in\QQ[T_1, \ldots, T_d]$ for
$i\in \ZZ^d /\Lambda$ such that $f(s)=Q_i(s)$ if $s\equiv i\mod \Lambda$.
\end{definition}
In fact, a quasi-polynomial is a polynomial that its coefficients are replaced by periodic functions.

Notice that $\ZZ^d/\Lambda$ has $\vert \det (\Lambda )\vert $ elements, and when $d=1$, then $\Lambda =q\ZZ$ for some $q>0$. In this case $f$ is
 called a quasi-polynomial of period $q$.


\begin{definition}
Let  $A=(a_{i,j})$ be a $d\times n$-matrix of rank $d$ with entries in $\NN$.
Let $a_j:=(a_{1j}, \ldots, a_{dj})$ be the $j$-th column of $A$ and the mapping $\varphi$ acts as below:
$$\begin{array}{llll}
\varphi_A:&\NN^d&\longrightarrow&\NN\\
&u&\longrightarrow&\#\left\{ \lambda\in \NN^n \vert A.\lambda =u \right\}.
\end{array}$$
Then $\varphi$ is called the \textbf{vector partition function} corresponding to the matrix $A$.
Equivalently,  $\varphi_A(u)$ is the coefficient
of ${\bold t}^u$ in the formal power series $\prod_{i=1}^n \frac{1}{(1-{\bold t}^{a_i})}$.
\end{definition}

Notice that $\varphi_A$ vanishes outside of $\pos(A):=\{\sum\lambda_ia_i\in\RR^n | \lambda_i\geq 0, 1\leq i\leq n\}$.
The type of this function and how this function works is very important. Indeed the values of $\varphi_A$ come from a quasi-polynomial.
Blakley showed in \cite{B} that $\NN^d$ can be decomposed into a finite number of
parts, called chambers, in such a way that $\varphi_A$ is a quasi-polynomial of degree $n-d$ in each chamber.
Later, Sturmfels in \cite{St} investigated these decompositions and
the differences of polynomials from one piece to another.

Here we briefly introduce the basic facts and necessary terminology
of vector partition functions, specially the
chambers and the polynomials (quasi-polynomials) obtained from
vector partition functions corresponding to a matrix $A$.
For more details about the vector partition function, we refer the
reader to \cite{B, BV, St}.

\subsubsection{Polyhedral Sets}
\begin{definition}(See also \cite{Zie})
A polyhedral complex $\Im$ is a finite collection of polyhedra in $\RR^d$ such that
\begin{enumerate}
\item the empty polyhedron is in $\Im$,
\item if $P \in \Im$, then all the faces of $P$ are also in $\Im$,
\item the intersection $P \cap Q$ of two polyhedra $P,Q \in \Im $ is a face of both of $P$ and of $Q$.
\end{enumerate}
The support $|\Im|$ of a polyhedral complex $\Im$ is the union of the polyhedra in $\Im$.
\end{definition}

\begin{definition}
If $\Im_1$ and $\Im_{2}$ are polyhedral complexes such that the support of  $\Im_2$ contains  the support of $\Im_1$ and such that for every $P \in \Im_1$ there is a $Q \in \Im_2$ with $Q \subset P$, then $\Im_2$ is a refinement of $\Im_1$.The common refinement of a family of polyhedra is a polyhedral complex such that its support is the
union of the polyhedra and such that each member is the intersection of some of the polyhedra.
\end{definition}

\subsubsection{ Chamber complex of a vector partition function.\\\\}
Let $\sigma\subseteq \{1, \ldots, n\}$.
We will say that $\sigma$ is independent if for all
 $i\in \sigma$, the vectors $a_i$ are
 linearly independent.
If $\sigma$ is independent, we set $A_\sigma:=(a_{i})_{i\in\sigma}$ and denote by $\Lambda_\sigma$
the $\ZZ$-module generated by the columns of $A_\sigma$ as its base and $\partial Pos(A_{\sigma})$ the boundary of $ \pos(A_{\sigma})$. When $\sigma$
has $d$ elements ({\it i. e.} is a maximal independent set),  $\Lambda_\sigma$
is a sublattice of $\ZZ^d$.

Let $\sum_{A}$ be the set of all simplicial cones whose extremal rays  are generated by $d$-linearly independent column vectors of $A$.
\begin{definition}
The chamber complex of a vector partition function as defined is the common refinement of all simplicial cones $Pos(A_{\sigma})$. The maximal chambers $C$ of the chamber complex of $A$ are the connected components of
$Pos (A) - \bigcup_{ \ell \in \sum_{A}} \partial \ell$. These chambers are open and convex.
\end{definition}

Associated to each maximal chamber $C$ there is an index set
 $$\Delta(C):= \{\sigma\subset \{1, \ldots, n\}\; |\; C\subseteq \pos(A_\sigma)\}.$$ $\sigma\in \Delta(C)$
  is  called non-trivial if $G_\sigma:=\ZZ^d/\Lambda_\sigma\neq0$,
 equivalently if $\det (\Lambda_\sigma )\not= \pm 1$ ($G_\sigma$ is finite because $C\subseteq \pos(A_\sigma)).$

 \begin{example}\label{ex4vect}
Let $\varphi$ be the vector partition function corresponding to the matrix $A = \left(\begin{array}{cccc}2 & 3 & 6 & 7 \\1 & 1 & 1 & 1\end{array}\right)$ i.e., $a_1=(2,1)$ , $a_2= (3,1)$ , $a_3=(6,1)$ , $a_4 = (7,1)$.
Figure~\ref{figure:chamber.decomposition} shows the chamber decomposition associated to the vector partition function $\varphi$ and since  any arbitrary vectors $a_{i} ,a_{i+1}$ make an independent set,  the common refinement consists of disjoint union of open convex  polyhedral cones generated by $a_{i} ,a_{i+1}$. Figure~\ref{figure:chamber.complex} shows the common refinement of all simplicial cones $A_\sigma$. \\

  \end{example}
  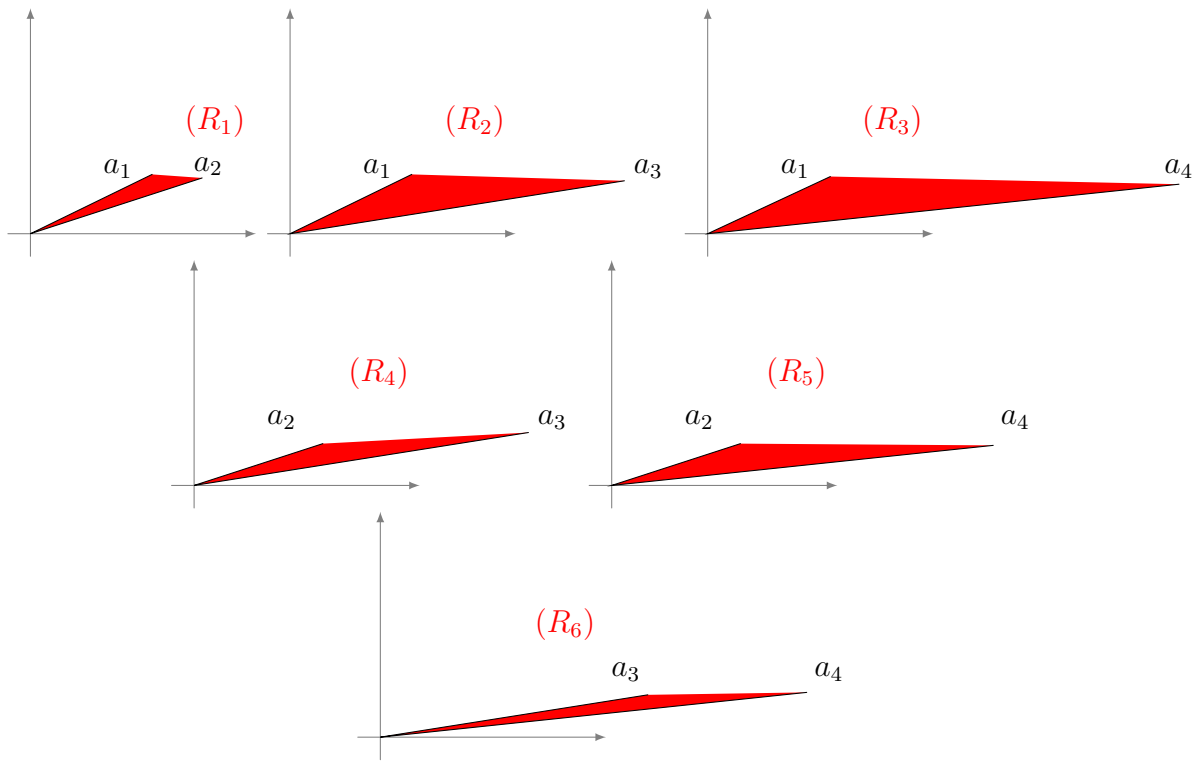
\begin{figure}
 \begin{tikzpicture}[scale=3,cap=round,>=latex]
      \newdimen\P
     \newdimen\Q
  \P=0.8cm
    \Q=0.6cm
  \coordinate (Origin)   at (0,0);
    \coordinate (XAxisMin) at (-0.1,0);
    \coordinate (XAxisMax) at (1,0);
    \coordinate (YAxisMin) at (0,-0.1);
    \coordinate (YAxisMax) at (0,1);
    \draw [thin, gray,-latex] (XAxisMin) -- (XAxisMax);
    \draw [thin, gray,-latex] (YAxisMin) -- (YAxisMax);
 \draw {  (26:\Q) -- (0,0)  -- (18:\P)  }[fill=red];
    \draw[red]                   (1,0.5) node[left]{$(R_1)$};
\draw[]                   (0.5,0.29) node[left] {$a_1$};
\draw[]                   (0.9,0.3) node[left] {$a_2$};

  \end{tikzpicture}
   \begin{tikzpicture}[scale=3,cap=round,>=latex]
    \coordinate (Origin)   at (0,0);
    \coordinate (XAxisMin) at (-0.1,0);
    \coordinate (XAxisMax) at (1,0);
    \coordinate (YAxisMin) at (0,-0.1);
    \coordinate (YAxisMax) at (0,1);
    \draw [thin, gray,-latex] (XAxisMin) -- (XAxisMax);
    \draw [thin, gray,-latex] (YAxisMin) -- (YAxisMax);
     \newdimen\P
     \newdimen\Q
  \P=0.6cm
    \Q=1.5cm
  \coordinate (center) at (0,0);
    \draw {(26:\P) -- (0,0)-- (9:\Q) }[fill=red];
     \draw[red]                   (1,0.5) node[left]{$(R_2)$};
\draw[]                   (0.5,0.29) node[left] {$a_1$};
\draw[]                   (1.7,0.29) node[left] {$a_3$};

  \end{tikzpicture}
\begin{tikzpicture}[scale=3,cap=round,>=latex]
 \coordinate (Origin)   at (0,0);
    \coordinate (XAxisMin) at (-0.1,0);
    \coordinate (XAxisMax) at (1,0);
    \coordinate (YAxisMin) at (0,-0.1);
    \coordinate (YAxisMax) at (0,1);
    \draw [thin, gray,-latex] (XAxisMin) -- (XAxisMax);
    \draw [thin, gray,-latex] (YAxisMin) -- (YAxisMax);
     \newdimen\P
     \newdimen\Q
  \P=0.6cm
    \Q=2.1cm
  \coordinate (center) at (0,0);
    \draw { (25:\P) -- (0,0) -- (6:\Q) }[fill=red];
     \draw[red]                   (1,0.5) node[left]{$(R_3)$};
\draw[]                   (0.5,0.29) node[left] {$a_1$};
\draw[]                   (2.2,0.29) node[left] {$a_4$};

  \end{tikzpicture}

\begin{tikzpicture}[scale=3,cap=round,>=latex]
 \coordinate (Origin)   at (0,0);
    \coordinate (XAxisMin) at (-0.1,0);
    \coordinate (XAxisMax) at (1,0);
    \coordinate (YAxisMin) at (0,-0.1);
    \coordinate (YAxisMax) at (0,1);
    \draw [thin, gray,-latex] (XAxisMin) -- (XAxisMax);
    \draw [thin, gray,-latex] (YAxisMin) -- (YAxisMax);
     \newdimen\P
     \newdimen\Q
  \P=0.6cm
    \Q=1.5cm
  \coordinate (center) at (0,0);
    \draw { (18:\P) -- (0,0) -- (9:\Q) }[fill=red];
         \draw[red]                   (1,0.5) node[left]{$(R_4)$};
\draw[]                   (0.5,0.29) node[left] {$a_2$};
\draw[]                   (1.7,0.29) node[left] {$a_3$};
  \end{tikzpicture}
  \begin{tikzpicture}[scale=3,cap=round,>=latex]
 \coordinate (Origin)   at (0,0);
    \coordinate (XAxisMin) at (-0.1,0);
    \coordinate (XAxisMax) at (1,0);
    \coordinate (YAxisMin) at (0,-0.1);
    \coordinate (YAxisMax) at (0,1);
    \draw [thin, gray,-latex] (XAxisMin) -- (XAxisMax);
    \draw [thin, gray,-latex] (YAxisMin) -- (YAxisMax);
     \newdimen\P
     \newdimen\Q
  \P=0.6cm
    \Q=1.7cm
  \coordinate (center) at (0,0);
    \draw { (18:\P) -- (0,0) -- (6:\Q) }[fill=red];
             \draw[red]                   (1,0.5) node[left]{$(R_5)$};
\draw[]                   (0.5,0.29) node[left] {$a_2$};
\draw[]                   (1.9,0.29) node[left] {$a_4$};
  \end{tikzpicture}

  \begin{tikzpicture}[scale=3,cap=round,>=latex]
 \coordinate (Origin)   at (0,0);
    \coordinate (XAxisMin) at (-0.1,0);
    \coordinate (XAxisMax) at (1,0);
    \coordinate (YAxisMin) at (0,-0.1);
    \coordinate (YAxisMax) at (0,1);
    \draw [thin, gray,-latex] (XAxisMin) -- (XAxisMax);
    \draw [thin, gray,-latex] (YAxisMin) -- (YAxisMax);
     \newdimen\P
     \newdimen\Q
  \P=1.2cm
    \Q=1.9cm
  \coordinate (center) at (0,0);
    \draw { (9:\P) -- (0,0) -- (6:\Q) }[fill=red];
             \draw[red]                   (1,0.5) node[left]{$(R_6)$};
\draw[]                   (1.2,0.29) node[left] {$a_3$};
\draw[]                   (2.1,0.29) node[left] {$a_4$};
  \end{tikzpicture}
  \caption{The chamber decomposition of Example \ref{ex4vect}.}
    \label{figure:chamber.decomposition}
   \end{figure}

  \begin{figure}[ht]
  \begin{center}

  \begin{tikzpicture}[scale=4,cap=round,>=latex]
    \coordinate (Origin)   at (8,9);
    \coordinate (XAxisMin) at (-0.1,0);
    \coordinate (XAxisMax) at (2,0);
    \coordinate (YAxisMin) at (0,-0.1);
    \coordinate (YAxisMax) at (0,1);
    \draw [thin, gray,-latex] (XAxisMin) -- (XAxisMax);
    \draw [thin, gray,-latex] (YAxisMin) -- (YAxisMax);
     \newdimen\P
     \newdimen\Q
  \P=0.6cm
    \Q=1.5cm
  \coordinate (center) at (0,0);
    \draw {(26:\P) -- (0,0)-- (18:\Q) }[fill=red];
    \draw {(18:\P) -- (0,0)-- (9:\Q) }[fill=blue];
    \draw {(9:\P) -- (0,0)-- (5:\Q) }[fill=green];
\draw[]                   (0.5,0.29) node[left] {$a_1$};
\draw[]                   (1.0,0.27) node[left] {$a_2$};
\draw[]                   (1.7,0.26) node[left] {$a_3$};
\draw[]                   (1.8,0.18) node[left] {$a_4$};
  \end{tikzpicture}
 \caption{The chambers complex of Example \ref{ex4vect}.}
 \label{figure:chamber.complex}

\end{center}
 \end{figure}
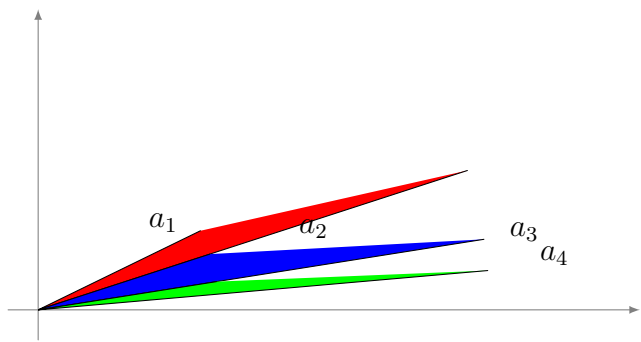
  If $A$ is an arbitrary matrix, it is not easy to explain precisely its corresponding chamber complex.
Here, in the following lemma we describe the chamber complex associated to a  special kind of $2\times n$-matrix that its columns are $(d_i, 1)$, $d_i\in \NN$ which it plays an important role in the proof of proposition \ref{hilbert bigraded}.
\begin{lemma}\label{chamber of n vector}
Let $$A=\left(\begin{array}{lll}
d_1&\ldots&d_n\\
1&\ldots&1\end{array}\right)$$ be a $2\times n$-matrix with entries in $\NN$
such that $d_1\leq\ldots \leq d_n$. Then the chambers corresponding to
 $\pos(A)$ are positive polyhedral cones $\Delta$ where
$\Delta$ is generated by $\{(d_i, 1), (d_{i+1}, 1)\}$ for all
$d_i\neq d_{i+1}$ and $i$ runs over $\{1, \ldots, n\}$.
\end{lemma}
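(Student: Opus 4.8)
The plan is to exploit the very special geometry of $A$: all its columns $a_i=(d_i,1)$ lie on the affine line $\{y=1\}\subset\RR^2$, so the ray $\RR_{\ge 0}a_i$ is completely determined by the ``slope'' $d_i$, and two columns span the same ray precisely when they have equal first coordinate. Consequently the chamber complex is governed by an elementary planar picture. First I would pass to the distinct values $e_1<e_2<\cdots<e_r$ occurring among $d_1\le d_2\le\cdots\le d_n$; since $A$ has rank $2$, we have $r\ge 2$. A two–element set $\sigma=\{i,j\}$ is independent exactly when $d_i\ne d_j$, and for such $\sigma$ with $d_i<d_j$,
$$\pos(A_\sigma)=\{(x,y)\in\RR^2 : y\ge 0,\ d_i\,y\le x\le d_j\,y\},$$
a full–dimensional simplicial cone in $\RR^2$ whose topological boundary is the union of its two extremal rays $\RR_{\ge0}(d_i,1)\cup\RR_{\ge0}(d_j,1)$.

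Next I would observe that, as $\sigma$ ranges over all independent pairs, each ray $\rho_c:=\RR_{\ge0}(e_c,1)$ for $1\le c\le r$ occurs as an extremal ray of some $\pos(A_\sigma)$, and no other rays arise; hence $\bigcup_{\ell\in\sum_A}\partial\ell=\bigcup_{c=1}^{r}\rho_c$. On the other hand, the weighted–average description of $\pos(A)$ gives $\pos(A)=\{(x,y):y\ge 0,\ e_1y\le x\le e_ry\}$. Deleting the finitely many rays $\rho_1,\dots,\rho_r$ from this planar cone leaves exactly the $r-1$ open sectors lying strictly between $\rho_c$ and $\rho_{c+1}$, $c=1,\dots,r-1$, as its connected components; by definition these are the maximal chambers, and the closure of the sector between $\rho_c$ and $\rho_{c+1}$ is the cone generated by $(e_c,1)$ and $(e_{c+1},1)$.

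It then remains only to reindex. Since $d_1\le\cdots\le d_n$ is sorted, a pair of consecutive distinct values $\{e_c,e_{c+1}\}$ coincides with a pair $\{d_i,d_{i+1}\}$ for a suitable index $i$ with $d_i\ne d_{i+1}$, and conversely every such consecutive pair $\{d_i,d_{i+1}\}$ realizes one of the $\{e_c,e_{c+1}\}$; this is precisely the description claimed. The whole argument is essentially a two–dimensional figure, so I do not expect a genuine obstacle; the single point deserving a careful line is the identity $\bigcup_{\ell\in\sum_A}\partial\ell=\bigcup_{c}\rho_c$, namely verifying that forming the common refinement introduces no walls beyond the rays through the distinct columns, and that repeated values among the $d_i$ produce the same ray and hence no extra chamber — which is exactly why only consecutive indices with $d_i\ne d_{i+1}$ appear in the conclusion.
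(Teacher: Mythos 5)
The paper states this lemma without supplying a proof, treating it as an immediate consequence of the definition of the chamber complex for a rank-$2$ matrix; your argument is a correct, complete verification of exactly that computation. Two small remarks: the lemma's phrasing tacitly conflates the open maximal chambers (as defined in the paper) with their closures, and the range of $i$ should really be $\{1,\ldots,n-1\}$; you handle both points correctly, in particular by passing to the distinct values $e_1<\cdots<e_r$ and noting $r\ge 2$ from $\operatorname{rank}A=2$, and by checking that the wall set $\bigcup_{\ell\in\Sigma_A}\partial\ell$ is precisely $\bigcup_{c=1}^{r}\RR_{\ge0}(e_c,1)$, so that no spurious walls or degenerate chambers arise from repeated $d_i$.
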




 Now, we are ready to state the vector partition function theorem, which
relies on the chamber decomposition of $\pos(A)\subseteq \NN^d$.

 \begin{theorem}\label{vector partition}(See \cite[Theorem 1]{St})
 For each chamber $C$ of maximal dimension in the chamber complex corresponding to matrix $A$,
 there exist a polynomial $P$ of degree
 $n-d$, a collection of polynomials  $Q_\sigma$
 and  functions $\Omega_\sigma: G_\sigma\setminus\{0\}\rightarrow \QQ$ indexed by non-trivial $\sigma\in \Delta(C)$
such that, if $u\in \NN A\cap \overline{C}$,
 $$\varphi_A(u)=P(u)+\sum\{\Omega_\sigma([u]_\sigma).Q_\sigma(u) : \sigma\in \Delta(C) , [u]_\sigma\neq 0\}$$
where $[u]_\sigma$ denotes the image of $u$ in $G_\sigma$. Furthermore, $\deg (Q_\sigma )=\#\sigma-d$.\\
\end{theorem}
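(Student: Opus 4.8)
Fix a maximal chamber $C$ and pick a basis $\tau\subseteq\{1,\dots,n\}$ (so $\#\tau=d$ and the columns $a_i$, $i\in\tau$, are linearly independent) with $\overline C\subseteq\pos(A_\tau)$; such a $\tau$ exists by the very definition of the chamber complex, and since $C$ avoids $\partial\pos(A_\tau)$ one in fact has $C\subseteq\pos(A_\tau)$ away from its boundary. The plan is to express $\varphi_A$, for $u$ running over $\NN A\cap\overline C$, as the lattice-point count of a family of polytopes depending linearly on $u$, and then to unwind that count into the stated quasi-polynomial shape.

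First I would fiber the partition count over $\tau$. For every $v\in\pos(A_\tau)$ one has $\varphi_{A_\tau}(v)=\mathbf 1\!\left[v\in\Lambda_\tau\right]$, because the unique rational preimage $A_\tau^{-1}v$ lies in the positive orthant and is integral exactly when $v\in\Lambda_\tau$. Splitting an expression $u=\sum_{i=1}^n\lambda_i a_i$ according to the coordinates $m_j:=\lambda_j$ for $j\notin\tau$ gives, for $u\in\overline C$,
$$\varphi_A(u)=\#\Bigl\{\,m\in\NN^{\{1,\dots,n\}\setminus\tau}\ :\ u-\textstyle\sum_{j\notin\tau}m_j a_j\in\Lambda_\tau\cap\pos(A_\tau)\,\Bigr\}.$$
Hence $\varphi_A(u)$ counts the points of a coset structure attached to $\Lambda_\tau$ lying in the polytope $P(u)=\{\,m\in\RR_{\geq 0}^{\,n-d}:u-\sum_j m_j a_j\in\pos(A_\tau)\,\}$; this is a genuine polytope because $\varphi_A(u)$ is finite and all data are rational, it has dimension $n-d$, and it is dilated and translated linearly as $u$ moves inside $C$.

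Next I would invoke the Ehrhart theory of this linearly varying family. Since $u$ stays in the single chamber $C$, the face lattice of $P(u)$ and the congruence classes of its vertices stay combinatorially constant, so the count is a quasi-polynomial in $u$ of degree $\dim P(u)=n-d$; its leading term is the normalized volume of $P(u)$, a polynomial of degree $n-d$ in $u$ with positive leading coefficient. The lower-order, periodic corrections come, through inclusion-exclusion over the faces of $P(u)$, from the facets of $\pos(A_\tau)$ and more generally from the sub-cones $\pos(A_\sigma)$ that contain $C$, i.e. from the $\sigma\in\Delta(C)$: each such $\sigma$ contributes a term whose periodicity is governed by $G_\sigma=\ZZ^d/\Lambda_\sigma$ and whose polynomial factor has degree equal to the dimension of the corresponding face, namely $\#\sigma-d$. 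Collecting the non-periodic contributions of all faces yields the polynomial $P$, with $\deg P=n-d$ since the volume term cannot be cancelled, and collecting the periodic contributions coset class by coset class yields the functions $\Omega_\sigma$ on $G_\sigma\setminus\{0\}$ together with the polynomials $Q_\sigma$, $\deg Q_\sigma=\#\sigma-d$.

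The main obstacle is precisely this last unwinding: passing from the Ehrhart count of the fiber polytope to the rigid shape ``a single polynomial $P$ of degree $n-d$ plus, for each non-trivial $\sigma\in\Delta(C)$, a product $\Omega_\sigma([u]_\sigma)\,Q_\sigma(u)$'' demands that one identify exactly which sub-cones $\pos(A_\sigma)$ enter the inclusion-exclusion, check that the periodicity attached to $\sigma$ is precisely that of $G_\sigma$ and not of a finer lattice, and verify that the $\sigma$-contribution genuinely factors with its periodic part depending only on the class $[u]_\sigma$. An essentially equivalent but perhaps more transparent route is induction on $n-d$: the base case $n=d$ is the computation $\varphi_A(u)=\mathbf 1[u\in\Lambda_{\{1,\dots,n\}}]$ above, and the inductive step rests on the difference equation $\varphi_A(u)-\varphi_A(u-a_n)=\varphi_{A'}(u)$ with $A'=(a_1,\dots,a_{n-1})$, summed along the ray through $a_n$; the work there is to show that on a fixed chamber of $A$ --- whose chamber complex refines that of $A'$ --- the ray $\{u-ka_n:k\geq 0\}$ meets the chambers of $A'$ in a combinatorially constant pattern, so that summing the inductively known quasi-polynomials over $k$ produces a single quasi-polynomial on $C$ of degree one higher.
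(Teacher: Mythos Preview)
The paper does not supply its own proof of this theorem: it is quoted verbatim as a known result from Sturmfels, with the attribution ``(See \cite[Theorem 1]{St})'' in the statement itself, and the paper only uses it (via Corollary~\ref{vector partition2}) as a black box. So there is no in-paper argument to compare your sketch against.

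That said, a few remarks on your sketch relative to Sturmfels's original argument. Your Ehrhart-theoretic route---fibering over a base $\tau$ and counting lattice points of the linearly varying polytope $P(u)$---does give quasi-polynomiality of degree $n-d$ on each chamber, and your inductive alternative via the difference equation $\varphi_A(u)-\varphi_A(u-a_n)=\varphi_{A'}(u)$ is also sound. But you correctly flag the real issue yourself: neither route, as written, delivers the \emph{specific} decomposition $P(u)+\sum_\sigma \Omega_\sigma([u]_\sigma)\,Q_\sigma(u)$ indexed precisely by the $\sigma\in\Delta(C)$, with the periodic factor depending only on the class in $G_\sigma=\ZZ^d/\Lambda_\sigma$ and the polynomial factor of degree exactly $\#\sigma-d$. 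Your inclusion--exclusion hand-wave over ``faces of $P(u)$'' does not obviously match up one-to-one with the index set $\Delta(C)$, nor does it explain why the periodicity attached to $\sigma$ is exactly $G_\sigma$ rather than some refinement. Sturmfels's proof in \cite{St} proceeds instead by a partial-fraction/residue expansion of the generating series $\prod_i(1-{\bold t}^{a_i})^{-1}$: the simple poles along the hyperplanes $\{{\bold t}^{a_i}=1\}$ group naturally according to the bases $\sigma$, and the residue at the pole associated to $\sigma$ produces directly a term whose periodic part is a character sum over $G_\sigma$ and whose polynomial part has the stated degree. That approach makes the indexing by $\Delta(C)$ and the factored form $\Omega_\sigma\cdot Q_\sigma$ fall out structurally, which is exactly the step your sketch leaves open.
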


\begin{corollary}\label{vector partition2}
 For each chamber $C$ of maximal dimension in the chamber complex of $A$,
 there exists a collection of polynomials  $Q_\tau$ for $\tau\in \ZZ^d/\Lambda$ such that
 $$
 \varphi_A(u)=Q_\tau (u), \ \hbox{if}\ u\in \NN A\cap \overline{C}\ \hbox{and}\ u \in \tau+\Lambda_{C}
 $$
 where $\Lambda_{C} =  \cap_{\sigma \in \Delta(C)} \Lambda_{\sigma}.$
\end{corollary}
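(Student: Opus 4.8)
The plan is to obtain the statement by simply regrouping the correction terms in Theorem~\ref{vector partition} according to residue classes modulo $\Lambda_C$. Fix a maximal chamber $C$ and, for $u\in\NN A\cap\overline{C}$, write
\[
\varphi_A(u)=P(u)+\sum\{\Omega_\sigma([u]_\sigma)\,Q_\sigma(u)\ :\ \sigma\in\Delta(C),\ [u]_\sigma\neq 0\}.
\]
First I would record that $\Lambda_C=\bigcap_{\sigma\in\Delta(C)}\Lambda_\sigma$ is a finite-index sublattice of $\ZZ^d$, so that $\ZZ^d/\Lambda_C$ is finite and the asserted conclusion (``$\varphi_A$ is a quasi-polynomial with respect to $\Lambda_C$ on $\overline{C}$'') is meaningful. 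Indeed, since $C$ is full-dimensional, $C\subseteq\pos(A_\sigma)$ forces $\pos(A_\sigma)$ to be $d$-dimensional; together with the convention that $\Lambda_\sigma$ (hence $G_\sigma$) is only formed for independent $\sigma$, this means each relevant $\sigma\in\Delta(C)$ is a basis of $\RR^d$, so each $\Lambda_\sigma$ has finite index in $\ZZ^d$, and a finite intersection of finite-index sublattices again has finite index. (If there is no such $\sigma$ the sum above is empty and one takes $\Lambda_C=\ZZ^d$, $Q\equiv P$.)

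The key observation is that, for each $\sigma\in\Delta(C)$, the image $[u]_\sigma\in G_\sigma=\ZZ^d/\Lambda_\sigma$ depends on $u$ only through the class of $u$ modulo $\Lambda_\sigma$, and since $\Lambda_C\subseteq\Lambda_\sigma$ it depends only on the class of $u$ modulo $\Lambda_C$. Hence, for a fixed $\tau\in\ZZ^d/\Lambda_C$, both the index set $\Delta_\tau:=\{\sigma\in\Delta(C):[u]_\sigma\neq 0\}$ and the rational scalars $\omega_\sigma:=\Omega_\sigma([u]_\sigma)$ (for $\sigma\in\Delta_\tau$) are independent of the choice of $u\in\tau+\Lambda_C$. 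Setting
\[
Q_\tau:=P+\sum_{\sigma\in\Delta_\tau}\omega_\sigma\,Q_\sigma\ \in\ \QQ[T_1,\dots,T_d],
\]
Theorem~\ref{vector partition} yields $\varphi_A(u)=Q_\tau(u)$ for every $u\in\NN A\cap\overline{C}$ with $u\in\tau+\Lambda_C$, which is exactly the claim (with $\Lambda=\Lambda_C$). Note that no Zariski-density or polynomial-identity argument is needed: the polynomial $Q_\tau$ is produced explicitly, and the desired equality holds pointwise on the (possibly sparse) set $\NN A\cap\overline{C}\cap(\tau+\Lambda_C)$ directly from the theorem.

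I do not expect a genuine obstacle here; this corollary is essentially a repackaging of Theorem~\ref{vector partition}. The only points that require care are the two bookkeeping facts used above: that $\Lambda_C$ has finite index in $\ZZ^d$ (so that ``quasi-polynomial with respect to $\Lambda_C$'' is the intended notion and $\tau$ ranges over a finite set), and that the entire correction term --- including the case distinction ``$[u]_\sigma\neq 0$'' as well as the value $\Omega_\sigma([u]_\sigma)$ --- is constant on each coset of $\Lambda_C$ inside $\overline{C}$. Both follow from the inclusions $\Lambda_C\subseteq\Lambda_\sigma$ together with the finiteness of the groups $G_\sigma$ guaranteed by maximality of $C$.
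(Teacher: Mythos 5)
Your proof is correct and takes essentially the same approach as the paper: regroup Sturmfels' formula (Theorem~\ref{vector partition}) by residue class, observing that $[u]_\sigma$, hence both the index set $\{\sigma:[u]_\sigma\neq 0\}$ and the scalars $\Omega_\sigma([u]_\sigma)$, depend only on $u$ modulo $\Lambda_C\subseteq\Lambda_\sigma$. The extra bookkeeping you supply (finite index of $\Lambda_C$, pointwise validity without any density argument) is sound and merely makes explicit what the paper's two-line proof leaves implicit.
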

\begin{proof}
The class $\tau$ of $u$ modulo $\Lambda$ determines  $[u]_\sigma$  in
$G_\sigma=\ZZ^d/\Lambda_\sigma$. The term of the right-hand side of the equation in the above theorem is
a polynomial determined by $[u]_\sigma$, hence by $\tau$.
\end{proof}

Notice that setting $\Lambda$ for the intersection of the lattices
$\Lambda_\sigma$ with $\sigma$ maximal, the class of $u$ mod $\Lambda$  determines the class of $u$ mod $\Lambda_{C}$. Thus, the corollary holds with $\Lambda$ in place of $\Lambda_{C}$.\\

In the next section, we will use this theorem to show that
the Hilbert function of a graded ring is determined by a finite collection of polynomials.
\section{Hilbert functions of non-standard bigraded rings}

Let  $S=k[y_1, \ldots, y_m]$ be a $\ZZ^{d-1}$-graded polynomial ring over a field and let
$I=(f_1,\ldots ,f_n)$ be a graded ideal
such that its generators $f_i$ are homogeneous of degree $d_i$.
To get information about the behavior  of  $i$-syzygy module of $I^t$ as $t$ varies, we
 pass to Rees algebra $\R_I=\oplus_{t\geq 0}I^t$.
 $\R_I$ has a $(\ZZ^{d-1}\times\ZZ )$-graded
 algebra structure
 such that $\left(\R_I\right)_{(\mu, n)}=(I^n)_\mu$.

 Recall that $\R_I$ is a graded
 quotient of $R:=S[x_1, \ldots, x_n]$ with grading extended from the one of $S$ by setting $\deg (a):=(\deg (a), 0)$ for
 $a\in S$ and $\deg (x_j):=(d_j ,1)$ for all $j= 1, \ldots, n$.
 As noticed in \cite{BCH}, if  $\G_\bullet$ is a $\ZZ^d$-graded free $R$-resolution of  $\R_I$,
 then by setting $$B:=k[x_1, \ldots, x_n]=R/(y_1, \ldots, y_m)$$
 we will have the following equality:
 $$
 \tor_i^S(I^t, k)_\mu=H_i(\G_\bullet\otimes_R B)_{(\mu,t)}.
 $$

The complex $\G_\bullet\otimes_R B$ is a $\ZZ^d$-graded
 complex of free $S$-modules. Its homology modules are therefore
 finitely generated $\ZZ^d$-graded $S$-modules, on which we will apply results derived from
 the ones on vector partition functions describing the Hilbert series of $S$.



Now we are ready to prove the main result of this section.
\begin{proposition}\label{hilbert bigraded}
Let $B=k[T_1, \ldots, T_n]$ be a bigraded polynomial ring over  field $k$
with $\deg(T_i)=(d_i, 1)$. Assume that the number of distinct $d_i$'s is $r\geq 2$.
Then there exists a finite index sublattice $\Lambda$ of $\ZZ^2$
and collections of polynomials
$Q_{ij}$ of degree $n-2 $ for $1\leq i\leq r-1$ and $1\leq j\leq s$ such that
for any $(\mu, \nu)\in \ZZ^2\cap R_i$ and $\overline{\left(\mu, \nu\right)}\equiv g_j \mod \Lambda$
in ${\ZZ^2}/{\Lambda}:=\{g_1, \ldots, g_s\}$,
$$HF(B, (\mu, \nu))= Q_{ij}(\mu, \nu)$$
where
$R_i$ is the convex polyhedral cone generated by linearly independent vectors $\{(d_i, 1), (d_{i+1}, 1)\}$.

Furthermore, $Q_{ij}(\mu, \nu) = Q_{ij}(\mu^{\prime}, \nu^{\prime}) $ if $ \mu - \nu d_{i} \equiv  \mu^{\prime}-\nu^{\prime}d_i  \mod (\det (\Lambda))$.
\end{proposition}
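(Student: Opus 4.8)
The plan is to read off the Hilbert function of $B$ from the explicit rational expression for its Hilbert series and then apply the vector partition function machinery. By Proposition~\ref{Hilbert nonstandard}(1), since $B=k[T_1,\ldots ,T_n]$ with $\deg(T_i)=(d_i,1)$ is a polynomial ring, one has
$$
H(B;t_1,t_2)=\prod_{i=1}^n\frac{1}{1-t_1^{d_i}t_2},
$$
so $HF(B;(\mu,\nu))$ is exactly the coefficient of $t_1^\mu t_2^\nu$ in this product, which by definition is the value $\varphi_A(\mu,\nu)$ of the vector partition function attached to the $2\times n$ matrix $A$ whose $i$-th column is $(d_i,1)$. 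Thus the statement will follow from Theorem~\ref{vector partition} / Corollary~\ref{vector partition2} once I identify the chamber complex of $A$, which is precisely what Lemma~\ref{chamber of n vector} provides: the maximal chambers are the cones $R_i=\pos\{(d_i,1),(d_{i+1},1)\}$ for the $r-1$ consecutive distinct values of the $d$'s. Note $\pos(A)=\bigcup_i R_i$ and $\NN A\cap\overline{R_i}$ is cofinite in $\ZZ^2\cap R_i$, so away from finitely many lattice points the formula is what we claim; one absorbs these exceptional points either by enlarging $t_0$-type bookkeeping elsewhere or by noting $R_i$ is an \emph{open} cone so its interior lattice points are all hit for $\mu\gg 0$.

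First I would fix, for each $i$, a maximal chamber $C=R_i$ and let $\Lambda_i:=\bigcap_{\sigma\in\Delta(C)}\Lambda_\sigma$ be the lattice of Corollary~\ref{vector partition2}; then I would set $\Lambda:=\bigcap_{i=1}^{r-1}\Lambda_i$, a finite-index sublattice of $\ZZ^2$ (finite index because each $\Lambda_\sigma$ with $\sigma$ maximal has finite index, being generated by two linearly independent integer vectors). Write $\ZZ^2/\Lambda=\{g_1,\ldots ,g_s\}$. By Corollary~\ref{vector partition2} and the remark immediately following it, on each chamber $R_i$ the function $\varphi_A$ agrees with a polynomial $Q_{ij}$ depending only on the class $g_j$ of $(\mu,\nu)$ modulo $\Lambda$, and $\deg Q_{ij}=n-2$ by the degree statement in Theorem~\ref{vector partition} (the top-degree term comes from the polynomial $P$ of degree $n-d=n-2$). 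This gives exactly
$$
HF(B;(\mu,\nu))=Q_{ij}(\mu,\nu)\qquad\text{for }(\mu,\nu)\in\ZZ^2\cap R_i,\ \overline{(\mu,\nu)}\equiv g_j\bmod\Lambda .
$$

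For the final "furthermore" clause I would argue as follows. The relevant maximal independent sets $\sigma\in\Delta(R_i)$ all contain the pair indexing $\{(d_i,1),(d_{i+1},1)\}$, or more to the point, the lattice $\Lambda_i$ contains the sublattice generated by $(d_i,1)$ together with $(1,0)\cdot\det$-type considerations; concretely, one computes $\det\Lambda_{\{i,i+1\}}=d_{i+1}-d_i$ and checks that the vector $(d_i,1)$ lies in $\Lambda_i$, so that translation by $(d_i,1)$ preserves each residue class modulo $\Lambda$. Hence $Q_{ij}$ is invariant under $(\mu,\nu)\mapsto(\mu+d_i,\nu+1)$ up to the residue, and iterating, $Q_{ij}(\mu,\nu)$ depends on $(\mu,\nu)$ only through $\mu-\nu d_i$ modulo $\det(\Lambda)$ (using that $\Lambda\subseteq\Lambda_i$ and $\ZZ^2/\Lambda$ has $\det(\Lambda)$ elements). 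This yields $Q_{ij}(\mu,\nu)=Q_{ij}(\mu',\nu')$ whenever $\mu-\nu d_i\equiv\mu'-\nu'd_i\bmod\det(\Lambda)$.

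The main obstacle I anticipate is the bookkeeping in this last step: making precise that $(d_i,1)\in\Lambda$ (or at least that the relevant $Q_{ij}$ is periodic in the direction $(d_i,1)$) and that, combined with periodicity modulo $\Lambda$, this collapses the two-variable periodicity of $\varphi_A$ on $R_i$ to one-variable periodicity in $\mu-\nu d_i$. A clean way to do this is to change coordinates on $\ZZ^2$ by the unimodular map $(\mu,\nu)\mapsto(\mu-\nu d_i,\nu)$, under which $(d_i,1)\mapsto(0,1)$ and the cone $R_i$ maps to a cone spanned by $(0,1)$ and $(d_{i+1}-d_i,1)$; periodicity in the $(0,1)$ direction is then transparent, and one reads off that the $\nu$-coordinate is irrelevant modulo the period, leaving dependence only on $\mu-\nu d_i\bmod\det(\Lambda)$. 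The remaining points are routine: finiteness of the index $[\ZZ^2:\Lambda]$, the degree count $n-2$, and matching the $R_i$ here with the $R_i$ in the earlier figures.
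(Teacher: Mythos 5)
Your main argument is exactly the paper's: rewrite $H(B;t_1,t_2)$ as $\prod_i(1-t_1^{d_i}t_2)^{-1}$ via Proposition~\ref{Hilbert nonstandard}, recognize $HF(B;(\mu,\nu))$ as the vector partition function $\varphi_A(\mu,\nu)$ of the matrix $A$ with columns $(d_i,1)$, invoke Lemma~\ref{chamber of n vector} for the chamber decomposition, and read off the quasi-polynomial structure from Corollary~\ref{vector partition2}. You are in fact more careful than the paper in two places the paper glosses over: the degree count $n-2$, and the distinction between $\NN A\cap\overline{C}$ (where Sturmfels' theorem applies) and all of $\ZZ^2\cap R_i$.

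Where you go beyond the paper is the \emph{Furthermore} clause, which the paper's proof simply does not address. Your argument there has a concrete gap. You assert that the maximal $\sigma\in\Delta(R_i)$ all contain the index of $(d_i,1)$, hence that $(d_i,1)\in\Lambda_i:=\cap_{\sigma\in\Delta(R_i)}\Lambda_\sigma$. This fails in general for $i\geq 2$: by definition $\Delta(R_i)$ consists of all pairs $\{j,k\}$ with $d_j\leq d_i$ and $d_k\geq d_{i+1}$, so $\sigma$ need not contain $i$ at all. For instance, with degrees $\{1,2,4\}$ and $R_2=\pos\{(2,1),(4,1)\}$, the set $\sigma=\{1,3\}$ (columns $(1,1),(4,1)$) lies in $\Delta(R_2)$, but solving $a(1,1)+b(4,1)=(2,1)$ forces $3b=1$, so $(2,1)\notin\Lambda_{\{1,3\}}$ and hence $(d_2,1)\notin\Lambda_2$ (a fortiori not in the global $\Lambda$). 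So translation by $(d_i,1)$ does \emph{not} preserve residue classes mod $\Lambda$, and the change of coordinates $(\mu,\nu)\mapsto(\mu-\nu d_i,\nu)$ does not by itself collapse the two-dimensional periodicity to dependence on $\mu-\nu d_i$ alone. The periodicity claim, if it holds, requires a genuinely different argument tracking how each $\Lambda_\sigma$ (for $\sigma$ not containing $i$) interacts with the direction $(d_i,1)$; neither your proposal nor the paper supplies it.
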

\begin{proof}
Let
$$A=\left(\begin{array}{lll}
d_1 & \ldots & d_n\\
1 & \ldots & 1\end{array}\right)$$
be a $2\times n$-matrix corresponding to degrees of $T_i$.

The Hilbert function in degree
${\bf u}=(\mu, \nu)$ is the number of monomials $T_1^{\alpha_1}\ldots T_n^{\alpha_n}$
such that $\alpha_1(d_1, 1)+\ldots+\alpha_n(d_n, 1)=(\mu, \nu)$.
This equation is equivalent to the system of linear equations
$$A.\left(\begin{array}{c}\alpha_1\\
\vdots\\
\alpha_n\end{array}\right)=(\begin{array}{ll}
\mu & \nu\end{array}).$$
In this sense $HF(B, (\mu, \nu))$ will be the value of vector partition function
at  $(\mu, \nu)$. Assume that $(\mu, \nu)$ belongs to the chamber $R_i$ which is the convex
polyhedral cone generated by $\{(d_i, 1), (d_{i+1}, 1)\}$.
By \ref{vector partition2}, we know that for $(\mu, \nu)\in R_i$ and $(\mu, \nu)\equiv g_j\mod(\det \Lambda)$,
\begin{equation}\label{1}
\varphi_A(\mu, \nu)=Q_{ij}(\mu, \nu).
\end{equation}

\end{proof}
Notice that in Proposition \ref{hilbert bigraded}, if moreover we suppose that $d_i\neq d_j$ for all $i\neq j$,
then the Hilbert function in degree $(\mu, \nu)$ will also be the number of lattice paths from $(0,0)$
to $(\mu, \nu)$ in the lattice $S = \{(d_i, 1) | 1 \leqslant i \leqslant n \}$, but this can fail if some of the $d_i$s are equal.
For example if one has $d_i=d_{i+1}<d_{i+2}$, so the independent sets of vectors
$\{(d_i, 1), (d_{i+2}, 1)\}$ and $\{(d_{i+1}, 1), (d_{i+2}, 1)\}$
generate the same chamber and the number of lattice paths from
$(0, 0)$ to $(\mu, \nu)$ via lattice points is less than $\HF(B, (\mu, \nu))$.

Notice that although in this section the Hilbert function of special bigraded polynomial rings are obtained, this result can be given for every arbitrary $\ZZ^d$-graded polynomial rings using the concept of vector partition function theorem.
\section{Betti numbers of powers of ideals }


We now turn to the main result on Betti numbers of powers of ideals. Without additional effort, we  treat
the following more general situation of a collection of graded ideals and include a graded module $M$. Hence we will study the behaviour of $\dim_k \tor_i^R(MI_1^{t_1}\cdots I_{s}^{t_s},k)_{\mu}$ for $\mu\in \ZZ^p$ and ${\bold t}\gg 0$. To this aim we first use the
important fact that the module
$$
B_i:=\oplus_{t_1,\ldots ,t_s}\tor_i^R(MI_1^{t_1}\cdots I_{s}^{t_s},k)
$$
is a finitely generated $(\ZZ^p\times \ZZ^s)$-graded ring, over $k[T_{i,j}]$ setting $\deg (T_{i,j})=(\deg (f_{i,j}),e_i)$
with $e_i$ the $i$-th canonical generator of $\ZZ^s$ and for fixed $i$ the $f_{i,j}$ form a set of minimal generators of  $I_i$.
Hence $\tor_i^R(MI_1^{t_1}\cdots I_{s}^{t_s},k)_\mu =(B_i)_{\mu ,t_1e_1+\cdots +t_se_s}$.

The following result applied to $B_i$ will then give the asymptotic behaviour of Betti numbers. In particular case
if we have  a single $\ZZ$-graded ideal, we will use it to give a simple description of this eventual behaviour.

\subsection{The general case }

Let $\varphi : \ZZ^n \rightarrow \ZZ^d $ with $\varphi(\NN^n) \subseteq  \NN^d$ be a positive $\ZZ^d$-grading of $R:=k[T_{i,j}]$. Set $\ZZ^n:=\sum_{i=1}^n\ZZ e_i$, let $E$ be the set of $d$-tuples $e=(e_{i_1},...,e_{i_d})$ such that $(\varphi(e_{i_1}),...,\varphi(e_{i_d}))$ generates a lattice $\Lambda_e$ in $\ZZ^d$, and set
$$
\Lambda :=\cap_{e\in E} \Lambda_e,\quad\quad \xymatrix{s_\Lambda : \ZZ^d \ar^{can}[r]&\ZZ^d/\Lambda\\}.
$$

Denote by $C_i$, $i\in F$, the maximal cells in the  chamber complex associated to $\varphi$.
One has $$\overline{C_i} = \{ \xi\ \vert\ H_{i,j}(\xi ) \geqslant 0, \ 1\leqslant j \leqslant d \}$$ where $H_{i,j}$ is a linear form
in $\xi \in \ZZ^d$.

Let  $S=k[y_1, \ldots, y_m]$ be a $\ZZ^{p}$-graded polynomial ring over a field. Assume
that $\deg (y_j)\in \NN^p$ for any $j$, and let
$I_i=(f_{i,1},\ldots ,f_{i,r_i})$ be ideals, with $f_{i,j}$ homogeneous of degree $d_{i,j}$.

Consider $R:=k[T_{i,j}]_{1\leq i\leq s,\,1\leq j\leq r_i}$, set $\deg (T_{i,j})=(\deg (f_{i,j}),e_i)$, with $e_i$ the $i$-th canonical generator of $\ZZ^s$
and  the induced grading $\varphi : \ZZ^{r_1+\cdots +r_s} \rightarrow \ZZ^d :=\ZZ^p\times \ZZ^s$ of $R$.

Denote as above by $\Lambda$ the lattice in $\ZZ^d$ associated to $\varphi$, by $s_\Lambda : \ZZ^d \to \ZZ^d/\Lambda$ the  canonical morphism and by
$C_i$, for $i\in F$, the maximal cells in the the chamber complex associated to $\varphi$.
One has $\overline{C_i} = \{ (\mu ,t)\ \vert\ H_{i,j}(\mu ,t ) \geqslant 0, 1\leqslant j \leqslant d \}$ where $H_{i,j}$ is a linear form
in $(\mu ,t) \in \ZZ^p\times \ZZ^s=\ZZ^d$.

\begin{proposition}\label{general case}
With notations as above, let $B$ be a finitely generated $\ZZ^d$-graded $R$-module. There exist a finite set $U$ and convex sets of
dimension $d$ in $\RR^d$ of the form
$$
\Delta_u = \{ x \ \vert\ H_{i,j}(x) \geqslant a_{u,i,j}, \forall (i,j)\in G_u \} \subseteq \RR^d
$$
for $u \in U$, with $a_{u,i,j}=H_{i,j}(a)$ for $a\in \cup_{\ell}\supp_{\ZZ^d}(\tor^R_\ell (B,k))$, $G_u\subset F\times \{ 1,\ldots ,d\}$  and polynomials $P_{u,\tau}$ for $u\in U$ and $\tau\in \ZZ^d/\Lambda$ such that for all $\xi$ belonging to $\Delta_u$ we have
$$
\dim_k (B_{\xi }) = P_{u,s_{\Lambda} (\xi )} (\xi )
$$
and
for all $\xi$ out of $\bigcup_{u\in U}\Delta_u$ we have
$$\dim_k (B_{\xi }) = 0.$$
\end{proposition}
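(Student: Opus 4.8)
The plan is to write $\dim_k(B_\xi)$ as a finite alternating sum of Hilbert function values of the free modules in a minimal graded free resolution $\G_\bullet$ of $B$ over $R$, and then to use Proposition~\ref{Hilbert nonstandard}(2) together with the vector partition function machinery (Corollary~\ref{vector partition2}) to express each such value as a quasi-polynomial supported on the chambers $\overline{C_i}$. First I would fix a finite minimal graded free resolution $\G_\bullet : 0\to G_u\to\cdots\to G_0\to B\to 0$ with $G_\ell=\bigoplus_{a}R(-a)^{\beta_{\ell,a}}$; the shifts $a$ that occur all lie in $\Sigma_B:=\cup_\ell\supp_{\ZZ^d}(\tor^R_\ell(B,k))$, which is the finite set named in the statement. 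By the standard truncation/additivity argument (exactly as in Proposition~\ref{Hilbert nonstandard}(2)) one gets, for every $\xi\in\ZZ^d$,
$$
\dim_k(B_\xi)=\sum_{\ell}(-1)^\ell\sum_{a}\beta_{\ell,a}\,\varphi_R(\xi-a),
$$
where $\varphi_R$ is the vector partition function of the matrix whose columns are $\varphi(e_{i,j})=\deg(T_{i,j})$, i.e. $\varphi_R(v)=\dim_k R_v$. So the whole problem reduces to understanding the finitely many shifted functions $\xi\mapsto\varphi_R(\xi-a)$, $a\in\Sigma_B$.

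Next I would apply Theorem~\ref{vector partition} / Corollary~\ref{vector partition2} to $\varphi_R$: on each maximal chamber $\overline{C_i}$ of the chamber complex of $\varphi$ there is a polynomial $Q_{i,\tau}$, one for each class $\tau\in\ZZ^d/\Lambda$ (using the global lattice $\Lambda=\cap_{e\in E}\Lambda_e$, as permitted by the remark after Corollary~\ref{vector partition2}), with $\varphi_R(v)=Q_{i,\tau}(v)$ whenever $v\in\overline{C_i}$ and $v\equiv\tau\bmod\Lambda$, and $\varphi_R(v)=0$ for $v\notin\pos(\varphi)=\bigcup_i\overline{C_i}$. Translating by a fixed shift $a$: the function $\xi\mapsto\varphi_R(\xi-a)$ is polynomial (in $\xi$, with the polynomial depending on $s_\Lambda(\xi)$ since $s_\Lambda(\xi-a)$ is determined by $s_\Lambda(\xi)$ and $a$) on each translated chamber $a+\overline{C_i}=\{x\mid H_{i,j}(x)\ge H_{i,j}(a),\ 1\le j\le d\}$, and is zero off their union. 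This is exactly where the inequalities $H_{i,j}(x)\ge a_{u,i,j}$ with $a_{u,i,j}=H_{i,j}(a)$, $a\in\Sigma_B$, come from.

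Finally I would take the common refinement of the arrangement of all the hyperplanes $\{H_{i,j}(x)=H_{i,j}(a)\}$ as $(i,j)$ ranges over $F\times\{1,\ldots,d\}$ and $a$ over the finite set $\Sigma_B$; its maximal cells $\Delta_u$, $u\in U$ (a finite index set), are precisely the regions on which \emph{every} term $\varphi_R(\xi-a)$ is given by a single polynomial in $\xi$ determined by $s_\Lambda(\xi)$, and on which each term is either identically zero or not. Each $\Delta_u$ is cut out by finitely many of the inequalities $H_{i,j}(x)\ge H_{i,j}(a)$, giving the claimed description with $G_u\subset F\times\{1,\ldots,d\}$ recording which ones; on $\Delta_u$ the alternating sum above becomes a fixed polynomial $P_{u,\tau}(\xi)$ with $\tau=s_\Lambda(\xi)$. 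For $\xi\notin\bigcup_u\Delta_u$: such $\xi$ lies outside every translated chamber $a+\overline{C_i}$ that could contribute, so every $\varphi_R(\xi-a)=0$ and hence $\dim_k(B_\xi)=0$ — here one must be slightly careful that the full-dimensional cells of the refinement that miss $\pos(\varphi)+a$ for all relevant $a$ genuinely carry no contribution, and that lower-dimensional cells of the arrangement can be absorbed into (the closures of) the $\Delta_u$, which is the routine polyhedral bookkeeping.

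The main obstacle I anticipate is the bookkeeping in this last step: matching the piecewise-polynomial description coming from the \emph{sum} of shifted vector partition functions — each living on its own translated chamber complex — to a \emph{single} common polyhedral subdivision whose cells have the prescribed inequality form $H_{i,j}(x)\ge a_{u,i,j}$, and verifying that the resulting polynomial on each cell really depends on $\xi$ only through $s_\Lambda(\xi)$ (rather than through the finer classes $s_{\Lambda_e}(\xi-a)$). The remark following Corollary~\ref{vector partition2}, which lets one replace each $\Lambda_{C}$ by the global $\Lambda$, is exactly what makes this work, and the degree bound $n-d$ from Theorem~\ref{vector partition} (here $n=r_1+\cdots+r_s$) controls the degrees of the $P_{u,\tau}$; the vanishing statement is then just the fact that $\varphi_R$ vanishes outside $\pos(\varphi)$.
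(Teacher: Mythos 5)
Your proposal is correct and follows essentially the same route as the paper: both express $\dim_k(B_\xi)$ as a finite alternating sum $\sum_a c_a\,\dim_k R_{\xi-a}$ with shifts $a$ in $\Sigma_B=\cup_\ell\supp_{\ZZ^d}(\tor^R_\ell(B,k))$, invoke Corollary~\ref{vector partition2} to make each shifted term a quasi-polynomial with respect to the global lattice $\Lambda$ on the translated chambers $a+\overline{C_i}$, and then cut $\RR^d$ along all translated chamber walls $\{H_{i,j}(x)=H_{i,j}(a)\}$ to obtain the convex cells $\Delta_u$ on which the sum is a single polynomial $P_{u,\tau}$ and outside which all terms vanish. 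The only cosmetic difference is that you phrase the decomposition of $\dim_k(B_\xi)$ via the minimal free resolution and its Betti numbers, whereas the paper writes it via $\kappa_B$ in Proposition~\ref{Hilbert nonstandard}(2); these are the same thing.
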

\begin{proof}
By Proposition \ref{Hilbert nonstandard}, there exists a polynomial $\kappa_B (t_1,\ldots ,t_d)$ with integral coefficients such that
$$
H(B;{\bold t}) =\kappa_B ({\bold t})H(R;{\bold t})
$$
and $\kappa_B({\bold t})=\sum_{a\in A} c_a{\bold t}^a$ with $A\subset \cup_{\ell}\supp_{\ZZ^d}(\tor^R_\ell (B,k))$. Let $D_i:=\cup_j \{ x\ \vert\ H_{i,j}(x)=0\}$ be the minimal union of hyperplanes containing the border of $C_i$. The union $C$ of the
convex sets $\overline{C_i}+a$ can be decomposed into a finite union of convex sets $\Delta_u$, each $u\in U$
corresponding to one connected component of $C\setminus\bigcup_{i,a}(D_i+a)$. (Notice that
$\RR^d \setminus \bigcup_{i,a}(D_i+a)$ has finitely many connected components, which are convex sets of
the form of $\Delta_u$, and that we may drop the ones not contained in $C$ as
the dimension of $B_\xi$ is zero for $\xi$ not contained in any $\overline{C_i}+a$.)
We define $u$ as the set of pairs $(i,a)$ such that $(C_i+a)\bigcap \Delta_u\neq \emptyset$, and remark
that if $(i,a)\in u$ then $(j,a)\not\in u$ for $j\not= i$.

If $\xi\not\in \bigcup_i \overline{C_i}+a$, then $\dim_k R_{\xi -a}=0$, while if $\xi\in\overline{C_i}+a$ then
it follows from Corollary \ref{vector partition2} that there exist polynomials $Q_{i,\tau}$ such that
$$
\dim_k R_{\xi -a}=Q_{i,\tau }(\xi -a)\quad \hbox{if}\ \xi -a\equiv\tau\mod (\Lambda).
$$
Hence, setting $Q'_{i,a,\tau }(\xi ):=Q_{i,\tau +a}(\xi -a)$, one gets the conclusion
with
$$
P_{u,\tau }=\sum_{(i,a)\in u} c_aQ'_{i,a,\tau}.
$$
\end{proof}

\begin{remark}\label{severalmodules}
The above proof shows that if one has a finite collection of modules $B_i$,
setting  $A:=\cup_{i,\ell}\supp_{\ZZ^d}\tor_\ell^R(B_i,k)$, there exist
convex polyhedral cones $\Delta_u$ as above on which any $B_i$ has its Hilbert function given by a
quasi-polynomial
with respect to the lattice $\Lambda$.
\end{remark}
We now turn to the main result of this article. The more simple, but important, case of powers of
an ideal in a positively $\ZZ$-graded algebra over a field will be detailed just after.

\begin{theorem}\label{tor-general case}
In the situation above, there exist a finite set $U$ and a finite number of polyhedral convex cones
$$
\Delta_u = \{(\mu,t) | H_{i,j}(\mu,t)\geqslant a_{u,i,j}, (i,j)\in G_u \} \subseteq \RR^d,
$$
polynomials $P_{\ell ,u,\tau}$ for $u\in U$ and $\tau\in \ZZ^d/\Lambda$ such that, for any $\ell$,
$$
\dim_k (\tor_\ell^S(MI_1^{t_1}...I_s^{t_s}, k)_{\mu }) = P_{\ell,u,s_{\Lambda}} (\mu,t),\quad \forall (\mu,t)\in \Delta_u,
$$
and $\dim_k (\tor_\ell^S(MI_1^{t_1}...I_s^{t_s}, k)_{\mu}) = 0$ if $(\mu,t)\not\in \cup_{u\in U}\Delta_u$.

 Furthermore, for any $(u,i,j)$, $a_{u,i,j}=H_{i,j}(b)$, for some
 $$
 b\in
\bigcup_{i,\ell}\supp_{\ZZ^d}\tor_\ell^R(\tor^S_i(M\R_{I_1,\ldots ,I_s},R),k).
$$
\end{theorem}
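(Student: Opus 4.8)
The strategy is to reduce the general statement about $\tor_\ell^S(MI_1^{t_1}\cdots I_s^{t_s},k)$ to the already-proved Proposition \ref{general case} applied to suitable finitely generated $\ZZ^d$-graded $R$-modules, where $R=k[T_{i,j}]_{1\le i\le s,\,1\le j\le r_i}$ with $\deg(T_{i,j})=(\deg(f_{i,j}),e_i)$ and $d=p+s$. First I would introduce the Rees-type object $M\R_{I_1,\ldots,I_s}:=\bigoplus_{t_1,\ldots,t_s\ge 0} MI_1^{t_1}\cdots I_s^{t_s}$, which is a finitely generated $\ZZ^d$-graded module over $R$ (indeed a graded quotient construction, with the $T_{i,j}$ acting by multiplication by $f_{i,j}$ and raising $t_i$ by $1$), exactly as in the bigraded discussion preceding Proposition \ref{hilbert bigraded} and in \cite{BCH}. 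Choosing a $\ZZ^d$-graded free $R$-resolution $\G_\bullet\to M\R_{I_1,\ldots,I_s}$, and writing $B:=k[T_{i,j}]=R/(y_1,\ldots,y_m)$, I would record the identity
$$
\tor_\ell^S(MI_1^{t_1}\cdots I_s^{t_s},k)_\mu = H_\ell(\G_\bullet\otimes_R B)_{(\mu,\,t_1e_1+\cdots+t_se_s)},
$$
which is the multigraded analogue of the equality used in Section 3. Hence each homology module $H_\ell(\G_\bullet\otimes_R B)$ is a finitely generated $\ZZ^d$-graded $S$-module, and its Hilbert function in degree $(\mu,t)$ is precisely $\dim_k(\tor_\ell^S(MI_1^{t_1}\cdots I_s^{t_s},k)_\mu)$.

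Next I would apply Proposition \ref{general case} (or rather the version for a finite collection of modules noted in Remark \ref{severalmodules}) to the finitely many modules $B_\ell:=H_\ell(\G_\bullet\otimes_R B)$, $\ell=0,\ldots,\dim S$. This yields a finite set $U$ and convex cones $\Delta_u=\{(\mu,t)\mid H_{i,j}(\mu,t)\ge a_{u,i,j},\ (i,j)\in G_u\}$, with the $H_{i,j}$ the supporting forms of the chamber complex of $\varphi$, such that on each $\Delta_u$ the function $\dim_k(B_\ell)_\xi$ agrees with a polynomial $P_{\ell,u,s_\Lambda(\xi)}(\xi)$ that is quasi-polynomial with respect to the lattice $\Lambda=\cap_{e\in E}\Lambda_e$, and vanishes off $\bigcup_u\Delta_u$. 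Re-expressing $\xi=(\mu,t)$ and renaming gives exactly the stated formula $\dim_k(\tor_\ell^S(MI_1^{t_1}\cdots I_s^{t_s},k)_\mu)=P_{\ell,u,s_\Lambda(\mu,t)}(\mu,t)$ on $\Delta_u$, and $0$ outside. The key input here is that $\Lambda$ and the forms $H_{i,j}$ depend only on $\varphi$, hence only on the degrees of the generators $f_{i,j}$, so a single decomposition works for all $\ell$ simultaneously.

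It then remains to identify the shift constants $a_{u,i,j}$ as $H_{i,j}(b)$ for $b$ in $\bigcup_{i,\ell}\supp_{\ZZ^d}\tor_\ell^R(\tor_i^S(M\R_{I_1,\ldots,I_s},R),k)$. By the construction in Proposition \ref{general case}, the constants arising are $a_{u,i,j}=H_{i,j}(a)$ for $a$ ranging over $\bigcup_\ell\supp_{\ZZ^d}\tor_\ell^R(B_\ell,k)$. So the point is to bound the support of $\tor_\ell^R\bigl(H_\ell(\G_\bullet\otimes_R B),k\bigr)$ in terms of $\tor^R\bigl(\tor^S(M\R_{I_1,\ldots,I_s},R),k\bigr)$. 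Here I would use a change-of-rings spectral sequence: since $B=R/(y_1,\ldots,y_m)$ and the $y_j$ form a regular sequence on $R$ (but not necessarily on $M\R$), the homology of $\G_\bullet\otimes_R B$ computes $\tor_\ell^S(M\R_{I_1,\ldots,I_s},B)$ — or better, one passes through $\tor^S(M\R_{I_1,\ldots,I_s},R)$ — and then tensoring further with $k$ over $R$ (via a $\ZZ^d$-graded free $R$-resolution, or the Koszul complex on the $T_{i,j}$) one gets a finite filtration whose subquotients are subquotients of $\tor_\ell^R(\tor_i^S(M\R_{I_1,\ldots,I_s},R),k)$. Since the $\ZZ^d$-graded support of a subquotient is contained in that of the original module, the desired containment of supports follows. \emph{I expect this last identification of the $a_{u,i,j}$ — tracking the support through the spectral sequence and the two change-of-rings steps — to be the main technical obstacle}; the passage via Proposition \ref{general case} and the Rees construction is formal once the right module $M\R_{I_1,\ldots,I_s}$ and the grading $\varphi$ are set up.
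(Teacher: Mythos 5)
Your overall reduction matches the paper's: you form the multi-Rees module $M\R_{I_1,\ldots,I_s}$, observe that $B_\ell:=\bigoplus_{t_1,\ldots,t_s}\tor_\ell^S(MI_1^{t_1}\cdots I_s^{t_s},k)=H_\ell(\G_\bullet\otimes_R B)$ is a finitely generated $\ZZ^d$-graded module over the non-standard-graded polynomial ring $k[T_{i,j}]$, note that only finitely many $B_\ell$ are nonzero, and invoke Proposition~\ref{general case} together with Remark~\ref{severalmodules}. This is exactly what the paper does (citing \cite{BCH} for the finite generation rather than unwinding it through the resolution, but that is the same content).

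Where you diverge is at the ``Furthermore'' clause, and there you are over-engineering. You propose a change-of-rings spectral sequence to compare $\supp_{\ZZ^d}\tor_\ell^{k[T]}(B_\ell,k)$ with $\supp_{\ZZ^d}\tor_\ell^{R}(\tor_i^S(M\R_{I_1,\ldots,I_s},R),k)$, treating these as two a priori different objects. But (modulo the paper's admittedly inconsistent use of the symbol $R$) the module $\tor^S_i(M\R_{I_1,\ldots ,I_s},R)$ appearing in the statement is precisely $B_i=H_i(\G_\bullet\otimes B)$ — the tor of the Rees module against $B=k[T_{i,j}]$ over the extended polynomial ring — so the ``Furthermore'' is not an additional claim to be proved: it is the constraint $a_{u,i,j}=H_{i,j}(a)$, $a\in\bigcup_{i,\ell}\supp_{\ZZ^d}\tor_\ell^{k[T]}(B_i,k)$, already built into Proposition~\ref{general case} and Remark~\ref{severalmodules}, read off directly once the $B_i$ are substituted. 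No spectral sequence is needed, and nothing further has to be tracked. (A small related slip: you write the constraint set as $\bigcup_\ell\supp_{\ZZ^d}\tor_\ell^R(B_\ell,k)$, pairing the homological index of the tor with the index of the module; per Remark~\ref{severalmodules} it should be the double union $\bigcup_{i,\ell}\supp_{\ZZ^d}\tor_\ell^R(B_i,k)$.) So your argument is correct, but the part you flag as ``the main technical obstacle'' is in fact an artifact of not recognizing the paper's notation for $B_i$; the proof closes immediately once that identification is made.
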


\begin{proof}
We know from  \cite{BCH} that $B_i:=\oplus_{t_1,\ldots ,t_t}\tor_i^S(MI_1^{t_1}\cdots I_{s}^{t_s},k)$
is a finitely generated $\ZZ^d$-graded module over $R$. As $B_i\not= 0$ for only finitely many $i$,
the conclusion follows from Proposition \ref{general case} and Remark \ref{severalmodules}.
\end {proof}

The above results tell us that $\RR^d$ can be decomposed in a finite union of convex polyhedral cones
$\Delta_u$ on which, for any $\ell$, the dimension of  $\tor_\ell^S(MI_1^{t_1}...I_s^{t_s}, k)_{\mu}$, as a function of
$(\mu ,t)\in \ZZ^{p+s}$ is a quasi-polynomial with respect to a lattice determined by the degrees of the generators of
the ideals $I_1,\ldots ,I_s$.

This general finiteness statement may lead to pretty complex decompositions
in general, that depends on the number of ideals and on arithmetic properties of the sets of degrees of generators.
This complexity is reflected both by the covolume of $\Lambda$ as defined above and by the number of simplicial chambers
in the chamber complex associated to $\varphi$.
\subsection{The case of one graded ideal on a positively $\ZZ$-graded ring}
In the case that the polynomial ring is multigraded and we have finitely many ideals, the results are done in the Theorem \ref{tor-general case}.
To make the result more clear and maybe more useful, let us explain the details of this theorem in an important and simple case when we have
 one ideal in a positively $\ZZ$-graded polynomial ring over a field.

We begin by an elementary lemma.

\begin{lemma}\label{intersection}
For a strictly increasing sequence $d_1<\cdots <d_r$, and points of coordinates $(\beta_1^j, \beta_2^j)\in \RR^2$ for $1\leq j\leq N$, consider the half-lines $$L_i^j:= \{ (\beta_1^j, \beta_2^j)+\lambda (d_i, 1),\ \lambda\in \RR_{\geq 0} \}$$ and set $L_i^j(t):=L_i^j\cap \{ y=t\}$.
Then  there exist a positive integer $t_0$ and permutations $\sigma_i$, for $i=1,\cdots, r$ in the permutation group $S_N$
such that for all $t\geq t_0$,  the following properties are satisfied:

\item (1)
$L_{i}^{\sigma_i (1)}(t) \leqslant L_{i}^{\sigma_i (2)}(t) \leqslant \dots \leqslant L_{i}^{\sigma_i (N)}(t)$ for $1 \leqslant i\leqslant r$,
\item (2)
$L_{i}^{\sigma_i (N)} (t)\leqslant L_{i+1}^{\sigma_{i+1} (1)}(t)$.

Moreover $t_0$ can be taken as the biggest second coordinate of the intersection points of all pairs of half-lines(See Figure ~\ref{figure:shifts} and Figure~\ref{figure:t-large}).
\end{lemma}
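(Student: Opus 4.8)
The plan is to set things up as a purely two-dimensional elementary geometry argument about finitely many half-lines, all of whose direction vectors $(d_i,1)$ have positive second coordinate, so that each $L_i^j$ meets every horizontal line $\{y=t\}$ in exactly one point once $t$ exceeds $\beta_2^j$. Concretely, I would first write down the explicit parametrization: $L_i^j(t)$ has $x$-coordinate $x_i^j(t) := \beta_1^j + d_i\,(t-\beta_2^j)$, which is an affine-linear function of $t$ with slope $d_i$. So comparing two of these points on the line $\{y=t\}$ amounts to comparing two affine functions of $t$, and any two of them are either identically equal or cross exactly once; in the latter case the crossing happens at $t$ equal to the second coordinate of the (unique) intersection point of the two half-lines (or fail to cross in the relevant range, but that only helps). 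Taking $t_0$ to be strictly larger than the largest such second coordinate over all finitely many pairs — i.e. the quantity named in the ``Moreover'' clause — guarantees that for $t \geq t_0$ the total order of the $x$-coordinates $\{x_i^j(t)\}$ over all $i,j$ is constant in $t$. This is the crux of the whole statement and essentially the only nontrivial point; everything else is bookkeeping.

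Next I would fix such a $t_0$ and, for each fixed $i$, define $\sigma_i \in S_N$ to be the permutation sorting the $N$ values $x_i^1(t),\ldots,x_i^N(t)$ into nondecreasing order (for any/all $t\geq t_0$; by the previous paragraph the sorting permutation does not depend on the choice of $t$ in this range). By construction this gives property (1): $L_i^{\sigma_i(1)}(t)\leq \cdots \leq L_i^{\sigma_i(N)}(t)$ for all $t\geq t_0$.

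For property (2) I would compare, for a fixed $j$ and $j'$, the functions $x_i^j(t)$ and $x_{i+1}^{j'}(t)$: their difference has slope $d_i - d_{i+1} < 0$ since the $d$'s are strictly increasing, so $x_i^j(t) - x_{i+1}^{j'}(t) \to -\infty$ as $t\to\infty$, hence for $t$ past the crossing value (again the second coordinate of the intersection point of $L_i^j$ and $L_{i+1}^{j'}$, which is $\leq$ our $t_0$) we have $x_i^j(t) \leq x_{i+1}^{j'}(t)$. Applying this with $j = \sigma_i(N)$ and $j' = \sigma_{i+1}(1)$ gives exactly $L_i^{\sigma_i(N)}(t) \leq L_{i+1}^{\sigma_{i+1}(1)}(t)$ for $t\geq t_0$, which is (2).

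The only place requiring a little care is the degenerate case where two of the half-lines involved are parallel (possible when $d_i=d_{i'}$ for $i\neq i'$, or when comparing $L_i^j$ with $L_i^{j'}$): then the two affine functions either coincide identically or never cross, so they contribute no constraint to $t_0$ and the constant-ordering-for-large-$t$ conclusion still holds; in the ``parallel but equal'' subcase we may break the tie in $\sigma_i$ arbitrarily (say by the index $j$). Since for property (2) the relevant comparisons are always between $d_i$ and $d_{i+1}$ with $d_i < d_{i+1}$, no parallelism arises there, so (2) is unaffected. I expect the main obstacle — such as it is — to be purely expository: stating precisely that $t_0$ can be taken as the maximal second coordinate of the finitely many pairwise intersection points and checking that ``no intersection'' pairs are harmless; there is no real mathematical difficulty beyond the observation that affine functions of one variable are monotone in their mutual comparison past their unique crossing point.
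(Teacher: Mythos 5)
Your proof is correct and takes essentially the same route as the paper: compute the $y$-coordinate of each pairwise intersection, take $t_0$ to be the maximum, and use the strict monotonicity of the $d_i$'s to order consecutive families for property (2). You actually add a small refinement — explicitly handling the degenerate parallel case $L_i^j$ versus $L_i^{j'}$ (note your mention of $d_i=d_{i'}$ for $i\neq i'$ cannot occur under the hypothesis $d_1<\cdots<d_r$, so the only parallel comparisons are within a fixed $i$) — which the paper's terse argument leaves implicit.
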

\begin{proof}
If two half-lines $L_i^j$ and $L_u^v$ intersect at a unique point
 $A(x_A, y_A)$, then \\\\
 $$
 y_A=\frac{\det\left(\begin{array}{cc}\beta_1^v&d_u\\ \beta_2^v& 1\end{array}
 \right)-\det\left(\begin{array}{cc}\beta_1^j& d_i\\
 \beta_2^j& 1\end{array}\right)}{d_i-d_u}.
 $$
 Choose $t_0$ as the max of $y_A$, $A$ running over the intersection points. Only for $t\in [ t_0,+\infty [$ the ordering of the intersection points $L_i^j(t)$ on the line $\{ y=t\}$ is independent of $t$.
 Furthermore, as the $d_i$'s are strictly increasing (2) holds, which shows (1) as the ordering is independent of $t$.\\\\
 \end{proof}

\begin{figure}[ht]
  \centering
  \begin{tikzpicture}
    \coordinate (Origin)   at (0,0);
    \coordinate (XAxisMin) at (-1,0);
    \coordinate (XAxisMax) at (12,0);
    \coordinate (YAxisMin) at (0,-1);
    \coordinate (YAxisMax) at (0,5);
    \draw [thin, gray,-latex] (XAxisMin) -- (XAxisMax);
    \draw [thin, gray,-latex] (YAxisMin) -- (YAxisMax);

    \clip (-3,-1) rectangle (95cm,10cm); 
    \coordinate (Aone) at (1,2);
    \coordinate (Atwo) at (2,2);
    \coordinate (Athree) at (3,2);
     \coordinate (Bone) at (4,4);
    \coordinate (Btwo) at (5,4);
    \coordinate (Bthree) at (6,4);

    \coordinate (Done) at (3,5);
    \coordinate (Dtwo) at (4,5);
    \coordinate (Dthree) at (5,5);

    \foreach \x in {-7,-6,...,7}{
      \foreach \y in {-7,-6,...,7}{
      }
    }
    \draw [ultra thick,-latex,red] (0,1)
        -- (Aone) node [above left] {};
    \draw [ultra thick,-latex,red] (0,1)
        -- (Atwo) node [below right] {};
     \draw [ultra thick,-latex,red] (0,1)
        -- (Athree) node [above left] {};

\draw[red]                   (0,1) node[left] {} -- (7,8)
                                        node[right]{$L_{1}^{3}$};

\draw[red]                   (0,1) node[left] {$(\beta_1^3, \beta_2^3)$} -- (12,7)
                                        node[right]{$L_{2}^{3}$};

\draw[red]                   (0,1) node[left] {} -- (14.1,5.7)
                                        node[right]{$L_{3}^{3}$};

    \draw [ultra thick,-latex,violet] (3,3)
        -- (Bone) node [above left] {$$};
    \draw [ultra thick,-latex,violet] (3,3)
        -- (Btwo) node [above left] {$$};
     \draw [ultra thick,-latex,violet] (3,3)
        -- (Bthree) node [above left] {$$};

\draw[violet]                   (3,3) node[left] {} -- (9,9)
                                        node[right]{$L_{1}^{2}$};

\draw[violet]                   (3,3) node[left] {$(\beta_1^2, \beta_2^2)$} -- (14,8.5)
                                        node[right]{$L_{2}^{2}$};

\draw[violet]                   (3,3) node[left] {} -- (14.1,6.7)
                                        node[right]{$L_{3}^{2}$};
    \draw [ultra thick,-latex,brown] (2,4)
        -- (Done) node [above left] {$$};
    \draw [ultra thick,-latex,brown] (2,4)
        -- (Dtwo) node [above left] {$$};
     \draw [ultra thick,-latex,brown] (2,4)
        -- (Dthree) node [above left] {$$};

\draw[brown]                   (2,4) node[left] {} -- (7,9)
                                        node[right]{$L_{1}^{1}$};

\draw[brown]                   (2,4) node[left] {$(\beta_1^1, \beta_2^1)$} -- (12,9)
                                        node[right]{$L_{2}^{1}$};

\draw[brown]                   (2,4) node[left] {} -- (14,8)
                                        node[right]{$L_{3}^{1}$};

  \end{tikzpicture}
  \caption{3-Shifts.}
  \label{figure:shifts}
\end{figure}
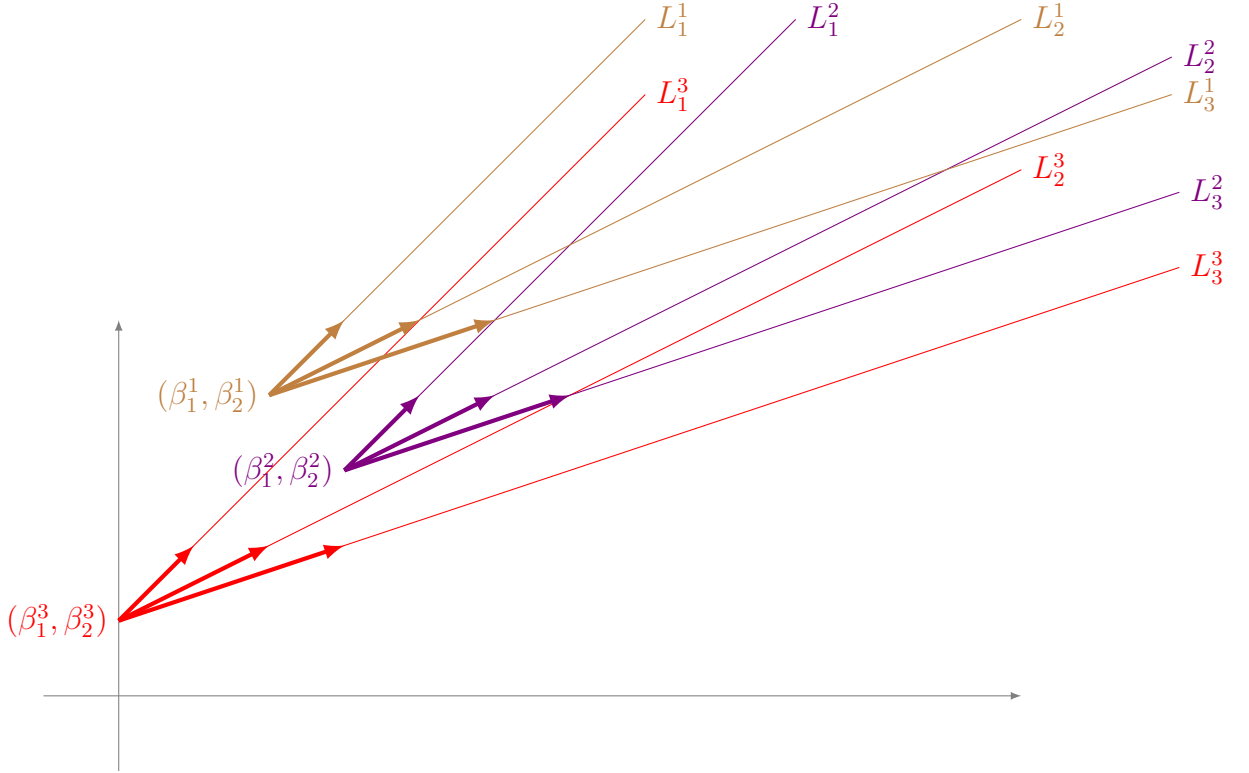

\begin{figure}[ht]
  \centering
  \begin{tikzpicture}
    \coordinate (Origin)   at (0,0);
    \coordinate (XAxisMin) at (-1,0);
    \coordinate (XAxisMax) at (12,0);
    \coordinate (YAxisMin) at (0,-1);
    \coordinate (YAxisMax) at (0,5);
    \draw [thin, gray,-latex] (XAxisMin) -- (XAxisMax);
    \draw [thin, gray,-latex] (YAxisMin) -- (YAxisMax);

    \clip (-3,-1) rectangle (95cm,10cm); 
    \coordinate (Aone) at (1,2);
    \coordinate (Atwo) at (2,2);
    \coordinate (Athree) at (3,2);
     \coordinate (Bone) at (4,4);
    \coordinate (Btwo) at (5,4);
    \coordinate (Bthree) at (6,4);

    \coordinate (Done) at (3,5);
    \coordinate (Dtwo) at (4,5);
    \coordinate (Dthree) at (5,5);

    \foreach \x in {-7,-6,...,7}{
      \foreach \y in {-7,-6,...,7}{
      }
    }
        -- (Aone) node [above left] {};
        -- (Atwo) node [below right] {};
        -- (Athree) node [above left] {};

\draw[red]                   (1,4) node[left] {} -- (8,9)
                                        node[right]{$L_{1}^{3}$};

\draw[red]                   (5,4) node[left] {$$} -- (14,9)
                                        node[right]{$L_{2}^{3}$};

\draw[red]                   (9,4) node[left] {} -- (14.1,6)
                                        node[right]{$L_{3}^{3}$};

        -- (Bone) node [above left] {$$};
        -- (Btwo) node [above left] {$$};
        -- (Bthree) node [above left] {$$};

\draw[violet]                   (2,4) node[left] {} -- (9,9)
                                        node[right]{$L_{1}^{2}$};

\draw[violet]                   (4,4) node[left] {$$} -- (13,9)
                                        node[right]{$L_{2}^{2}$};

\draw[violet]                   (8,4) node[left] {} -- (14.1,6.5)
                                        node[right]{$L_{3}^{2}$};
        -- (Done) node [above left] {$$};
        -- (Dtwo) node [above left] {$$};
        -- (Dthree) node [above left] {$$};

\draw[brown]                   (0,4) node[left] {} -- (7,9)
                                        node[right]{$L_{1}^{1}$};

\draw[brown]                   (3,4) node[left] {$$} -- (12,9)
                                        node[right]{$L_{2}^{1}$};

\draw[brown]                   (7,4) node[left] {} -- (14,7)
                                        node[right]{$L_{3}^{1}$};

\draw[black]                   (0,4) node[left] {$n \gg 0$} -- (14,4)
                                        node[right]{$$};

  \end{tikzpicture}
  \caption{Regions when n is sufficiently large.}
  \label{figure:t-large}
\end{figure}
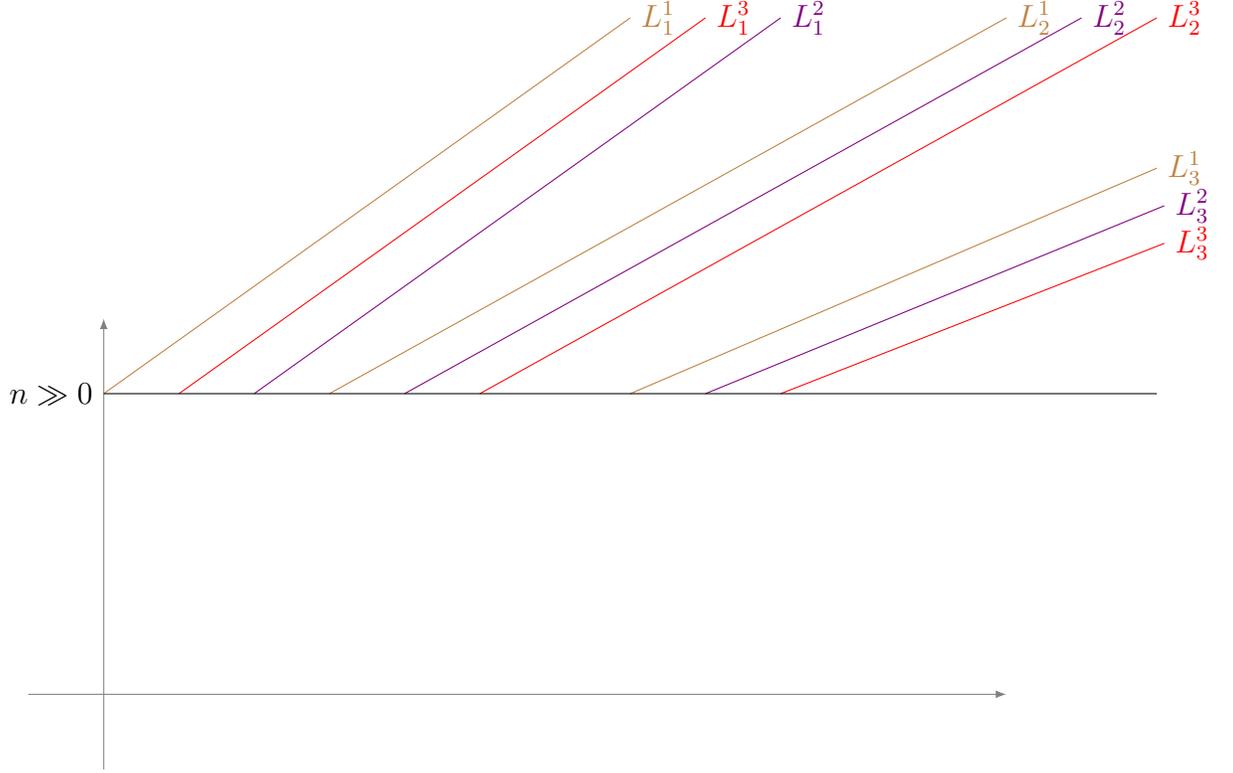


Now we are ready to give a specific description of $\tor^S_i(I^t, k)$ in the case of a $\ZZ$-graded ideal.
Let $E:=\{ e_1,\ldots ,e_s\}$ with $e_1<\cdots <e_s$ be a set of positive integers. For $\ell$ from
$1$ up to $s-1$, let $$\Omega_\ell :=\{ a{{e_\ell}\choose{1}}+b{{e_{\ell +1}}\choose{1}},\ (a,b)\in \RR_{\geq 0}^2\} $$ be the closed cone spanned by ${{e_\ell}\choose{1}}$ and ${{e_{\ell +1}}\choose{1}}$.  For integers $i\neq j$, let $\Lambda_{i,j}$ be the lattice spanned by ${{e_i}\choose{1}}$ and ${{e_{j}}\choose{1}}$ and
$$
\Lambda_\ell :=\cap_{i\leq \ell < j}\Lambda_{i,j}.
$$
Also we set
$$
\Lambda :=\cap_{i < j}\Lambda_{i,j} \hspace*{2mm} with \hspace*{2mm} \Delta = \det(\Lambda).
$$

In the case $E:=\{ d_1,\ldots ,d_r\}$, $e_1=d_1$ and $e_s=d_r$ and if $s\geq 2$, it follows from Theorem \ref{vector partition} that
\begin{itemize}
\item[(i)] $\dim_k B_{\mu ,t}=0$ if $(\mu ,t)\not\in \varOmega:= \bigcup_\ell \Omega_\ell$,

\item[(ii)] $\dim_k B_{\mu ,t}$ is a quasi-polynomial with respect to the lattice $\Lambda_\ell$ for $(\mu ,t)\in \Omega_\ell$.
\end{itemize}
Notice further that $\Lambda :=\cap_{i < j}\Lambda_{i,j}$ is a sublattice of $\Lambda_\ell$ for any $\ell$.

\begin{proposition} \label{z-graded}

In the above situation, if $M$ is a finitely
generated graded $B$-module, there exist $t_0$, $N$ and half-lines $L_i(t):=a_i t+b_i$ for $i=1,\ldots ,N$ with $b_i\in \ZZ$ and
$\{ a_1,\ldots ,a_N\}=E$ such that for  $t\geq t_0$:
\begin{itemize}
\item[(i)] $L_i (t)<L_j (t)\ \Leftrightarrow i<j$,
\item[(ii)] $M_{\mu ,t}=0$ if $\mu<L_1 (t)$ or $\mu >L_N (t)$,
\item[(iii)] For $t\geq t_0$ and $L_{i} (t)\leq \mu \leq L_{i+1}(t)$  $1\leq i < N $, $\dim_k (M_{\mu ,t})$ is a  quasi-polynomial $Q_{i}(\mu ,t)$
with respect to the lattice $\Lambda$.

\end{itemize}
\end{proposition}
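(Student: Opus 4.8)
The plan is to combine the ``shifted half-lines'' bookkeeping of Lemma \ref{intersection} with the Hilbert-function description of Proposition \ref{hilbert bigraded} (equivalently, the bigraded specialization of Proposition \ref{general case} and Remark \ref{severalmodules}). First I would pick a finite minimal $\ZZ^2$-graded free resolution of $M$ over $B=k[T_1,\dots,T_n]$, $\deg(T_i)=(d_i,1)$, and let $\{(\beta_1^j,\beta_2^j)\}_{1\le j\le N}$ be the (finitely many) generator degrees of the free modules in this resolution, i.e. the elements of $\bigcup_\ell\supp_{\ZZ^2}\tor_\ell^B(M,k)$. Proposition \ref{Hilbert nonstandard}(2) gives $H(M;{\bold t})=\kappa_M({\bold t})H(B;{\bold t})$ with $\kappa_M$ supported on these degrees, so $\dim_k(M_{\mu,t})=\sum_j c_j\,HF(B;(\mu,t)-(\beta_1^j,\beta_2^j))$ for integers $c_j$. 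By Proposition \ref{hilbert bigraded}, each term $HF(B;(\mu,t)-(\beta_1^j,\beta_2^j))$ vanishes outside the translated cone $\Omega+(\beta_1^j,\beta_2^j)$ and, inside each translated chamber $\Omega_\ell+(\beta_1^j,\beta_2^j)$, is a quasi-polynomial with respect to $\Lambda_\ell$, hence with respect to the finer lattice $\Lambda=\cap_{i<j}\Lambda_{i,j}$.

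Next I would organize the geometry. The boundary rays of the translated cones $\Omega+(\beta_1^j,\beta_2^j)$ are exactly the half-lines $L_i^j$ of Lemma \ref{intersection}, where $E=\{e_1,\dots,e_s\}$ is the set of distinct values among the $d_i$ (with $e_1=d_1$, $e_s=d_r$ in the notation there). Apply Lemma \ref{intersection}: there is a $t_0$ and permutations $\sigma_i$ so that for $t\ge t_0$ all the intersection points of pairs of these half-lines lie below the line $\{y=t_0\}$, and on each horizontal slice $\{y=t\}$ with $t\ge t_0$ the $L_i^j(t)$ are linearly ordered, the order being independent of $t$; moreover the whole block of slope-$e_i$ lines precedes the whole block of slope-$e_{i+1}$ lines. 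Relabel these $rN$ half-lines in this stable left-to-right order as $L_1,\dots,L_{rN}$ (with appropriate collapsing: by condition (i) of Lemma \ref{intersection} the order is strict, so I take the $L_i$ to be exactly the distinct half-lines occurring; set this count to be the ``$N$'' of the statement, abusing notation, and note $a_i\in E$ for each). Property (i) of the Proposition is then immediate from the construction, and $a_i\in E$ because each $L_i$ has slope $e_k$ for some $k$.

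For the vanishing statement (ii): if $\mu<L_1(t)$ then $(\mu,t)$ lies to the left of every translated cone $\Omega+(\beta_1^j,\beta_2^j)$, so every summand $HF(B;(\mu,t)-(\beta_1^j,\beta_2^j))$ is zero; symmetrically for $\mu>L_N(t)$, where $L_N$ is the rightmost boundary half-line (slope $e_s=d_r$). For (iii): fix $t\ge t_0$ and an index $i$ with $1\le i<N$, and consider the open slab region $\{(\mu,t): L_i(t)\le\mu\le L_{i+1}(t),\ t\ge t_0\}$. Because $t_0$ dominates all the intersection points of the half-lines, this region crosses no $L_u^v$ in its interior; hence on it, for each $j$, the point $(\mu,t)-(\beta_1^j,\beta_2^j)$ stays in a fixed chamber-or-complement of the chamber complex of $B$ — either permanently outside $\Omega$, or permanently inside a single $\Omega_\ell$. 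In the first case the $j$-th summand is identically $0$; in the second it equals $Q_{\ell,[\,(\mu,t)-(\beta_1^j,\beta_2^j)\,]_\Lambda}(\mu,t-(\beta_1^j,\beta_2^j))$ by Proposition \ref{hilbert bigraded} and the refinement remark after Corollary \ref{vector partition2}. A $\Lambda$-quasi-polynomial composed with the translation $(\mu,t)\mapsto(\mu,t)-(\beta_1^j,\beta_2^j)$ and multiplied by the constant $c_j$ is again a $\Lambda$-quasi-polynomial in $(\mu,t)$; a finite sum of such is a $\Lambda$-quasi-polynomial $Q_i(\mu,t)$. This proves (iii).

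The main obstacle is the translation bookkeeping in step three: one must check that the slab between two consecutive $L_i$ meets the interior of no translated chamber wall, so that every summand is ``frozen'' in one polynomial piece. This is exactly what the choice of $t_0$ in Lemma \ref{intersection} buys — all crossings of the half-lines $L_u^v$ happen below $\{y=t_0\}$, so above it the combinatorial type of $(\mu,t)$ relative to the arrangement $\{L_u^v\}$ depends only on which consecutive pair $L_i,L_{i+1}$ it lies between. A secondary (routine) point is the bookkeeping that a $\Lambda_\ell$-quasi-polynomial is a $\Lambda$-quasi-polynomial, which follows since $\Lambda\subseteq\Lambda_\ell$ makes the residue class mod $\Lambda$ determine the class mod $\Lambda_\ell$ (the remark following Corollary \ref{vector partition2}), and that quasi-polynomiality is preserved under integer translations of the argument and under finite $\ZZ$-linear combinations.
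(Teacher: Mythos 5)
Your proposal follows essentially the same route as the paper's proof: both decompose $\dim_k M_{\mu,t}$ via Proposition \ref{Hilbert nonstandard} as a $\ZZ$-linear combination of shifted Hilbert functions of $B$, invoke Lemma \ref{intersection} to stabilize the ordering of the translated boundary rays for $t\geq t_0$, and then apply Proposition \ref{hilbert bigraded} (with the remark after Corollary \ref{vector partition2} to pass to the common lattice $\Lambda$) on each slab between consecutive half-lines. Your phrasing that the slab ``crosses no $L_u^v$ in its interior so each shifted point stays in a fixed chamber'' is a cleaner statement of what the paper encodes with its interval bookkeeping $I_i^j$, $I_i^N$ and Cases I/II, but it is the same argument.
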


\begin{proof}
By Proposition \ref{Hilbert nonstandard}, there exists a polynomial $P(x,y)$ with integral coefficients such that
$$
H(M;(x,y)) =P(x,y)H(B;(x,y))
$$
of the form $P(x,y)=\sum_{(a,b)\in A} c_{a,b}x^ay^b$ with $A\subset \cup_{\ell}\supp_{\ZZ^2}(\tor^R_\ell (M,k))$. Let
$$A=\{(\beta_1^{1}, \beta_2^{1}), \ldots, (\beta_1^{N}, \beta_2^{N})\}.$$
Now let $L_i^{j}(t):=d_i t+b_j$ be the half-line parallel to the vector $(d_i, 1)$ and
passing through the point $(\beta_1^j, \beta_2^j)$
for  $1\leq i, j\leq N$. Then item(i) follows directly from Lemma \ref{intersection} (i) and item(ii) from the fact that $M_{(\mu,t)}= 0$ unless $(\mu,t)\in \bigcup_{i=1}^{N} (\beta_1^{i}, \beta_2^{i})+\varOmega$.\\

To prove (iii), following \ref{intersection} we can consider two type of intervals as below:
$$
I_{i}^{j}:= [L_{i}^{\sigma_{i}(j)}(n),L_{i}^{\sigma_{i}(j+1)}(n)] \hspace*{2mm} for  \hspace*{2mm} j<N
$$
and
$$
I_{i}^{N}:= [ L_{i}^{\sigma_{i}(N)}(n),L_{i+1}^{\sigma_{i+1}(1)}(n)] \hspace*{2mm} for \hspace*{2mm} i<r
$$
We write $I_{N_{p}+q}:=I_{p}^{\sigma_{p}(q)}$, $L_{N_{p}+q} = L_{p}^{\sigma_{p}(q)}$for $0 \leq q < N$. Then for any degree $(\alpha,n)$ in the support of $M$ there is two cases:
\begin{itemize}
\item[Case I.]
if $\alpha \in I_{i}^{j}$, then $(\alpha,n)$ belongs to i-th chamber (i.e., chamber where $L_i^{j}(t)$ and $L_{i+1}^{j}(t)$ are its boundary.) of shifts $\{(\beta_1^{\sigma_j(1)}, \beta_2^{\sigma_j(1)}), \ldots, (\beta_1^{\sigma_i(j)}, \beta_2^{\sigma_i(j)})\}$
and for the other shifts $(\alpha, n)$ belongs to $(i-1)$-st chambers.\\
\item[Case II.]
if $\alpha \in I_{i}^{N}$, then  $(\alpha, n)$ belongs to $i$-th chamber
for all of the shifts.
\end{itemize}

Then by Proposition \ref{hilbert bigraded} there exist polynomials  $Q_{ij}$ such that $\dim B_{(\mu,t)} = Q_{ij}((\mu,t))$,\\
 if $\mu-td_{i} \equiv j \mod(\Delta)$.\\
By setting $\widetilde{Q}_{ik}^{j} = c_{(\beta_1^{j}, \beta_2^{j})}Q_{i,(k-\beta_1^{j}+\beta_2^{j}d_i)}(x-\beta_1^{j},y- \beta_2^{j})$ one can conclude that
if $\alpha \in I_{i}^{j},$ then
$$
\dim_k(M_{\alpha,t})= \sum_{c=1}^{j} \widetilde{Q}_{i,(\alpha-td_i)}^c (\alpha,n) + \sum_{c=j+1}^{N} \widetilde{Q}_{(i-1),(\alpha-td_{i-1})}^c (\alpha,t).
$$
\end{proof}

\begin{theorem}\label{main res}
Let $S=k[x_1, \ldots, x_n]$ be a positively graded polynomial ring over
a field $k$ and let $I$ be a homogeneous ideal in $S$.

There exist,
$t_0,m,D\in \ZZ$, linear functions $L_i(t)=a_i t+b_i$,
for $i=0,\ldots ,m$, with $a_i$ among the degrees
of the minimal generators of $I$ and $b_i\in \ZZ$, and polynomials $Q_{i,j}\in \QQ [x,y]$ for
$i=1,\ldots ,m$ and $j\in 1,\ldots ,D$, such that, for $t\geq t_0$,

(i) $L_i(t)<L_j(t)\ \Leftrightarrow\ i<j$,

(ii) If $\mu <L_0(t)$ or $\mu >L_m(t)$, then $\tor_i^S(I^t, k)_{\mu}=0$.

(iii)  If $L_{i-1} (t)\leq \mu \leq L_{i}(t)$ and
$a_i t-\mu \equiv j\mod (D)$, then
$$
\dim_k\tor_i^S(I^t, k)_{\mu}=Q_{i,j}(\mu ,t).
$$
\end{theorem}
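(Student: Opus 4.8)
The plan is to derive Theorem \ref{main res} as essentially a specialization of the machinery already set up, applied to the single-ideal case, by feeding the Rees-algebra Tor-modules into Proposition \ref{z-graded}. First I would recall that for a homogeneous ideal $I=(f_1,\ldots,f_n)\subseteq S$ with $\deg f_j=a_j$, the module $B_i:=\bigoplus_{t\geq 0}\tor_i^S(I^t,k)$ is, by \cite{BCH}, a finitely generated $\ZZ^2$-graded module over the polynomial ring $R=k[T_1,\ldots,T_n]$ with $\deg T_j=(a_j,1)$, and that $\tor_i^S(I^t,k)_\mu=(B_i)_{\mu,t}$. Grouping the degrees $a_j$ into the distinct values $d_1<\cdots<d_r$, the ambient graded polynomial ring $B=k[T_1,\ldots,T_n]$ is exactly the bigraded ring of Proposition \ref{hilbert bigraded}; so the chamber/cone data $\Omega_\ell$, the lattices $\Lambda_\ell$, and the global lattice $\Lambda$ with $\Delta=\det(\Lambda)$ are all available. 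Since $B_i\neq 0$ for only finitely many $i$, I would apply Proposition \ref{z-graded} to each nonzero $M=B_i$ simultaneously (the set $A$ of exponents can be taken as the union of the $\supp_{\ZZ^2}\tor_\ell^R(B_i,k)$ over all $i$, so the half-lines $L_1,\ldots,L_N$, the threshold $t_0$, and the lattice $\Lambda$ are common to all $i$).

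Next I would unwind what Proposition \ref{z-graded} gives for each $B_i$: a common family of linear functions $L_j(t)=a_j t+b_j$ with the $a_j$ ranging over $E=\{d_1,\ldots,d_r\}$, ordered so that $L_j(t)<L_{j'}(t)\Leftrightarrow j<j'$ for $t\geq t_0$ (item (i)), the vanishing of $(B_i)_{\mu,t}$ outside the strip $L_1(t)\leq\mu\leq L_N(t)$ (item (ii)), and on each sub-interval $L_{j}(t)\leq\mu\leq L_{j+1}(t)$ a quasi-polynomial expression $\dim_k(B_i)_{\mu,t}=Q^{(i)}_j(\mu,t)$ with respect to $\Lambda$ (item (iii)). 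To match the exact statement of Theorem \ref{main res}, I then relabel: set $m:=N$, take $L_0,\ldots,L_m$ as the ordered half-lines (reindexing from $0$), set $D:=\Delta=\det(\Lambda)$, and translate the "mod $\Lambda$" description of the quasi-polynomials into the scalar congruence $a_i t-\mu\equiv j\bmod D$ — this uses the last sentence of Proposition \ref{hilbert bigraded}, namely that the relevant quasi-polynomial depends on $(\mu,t)$ only through $\mu-\nu d_i\bmod\det(\Lambda)$ within the cone $R_i$, together with the observation at the end of Section 2 that the class mod $\Lambda$ determines the class mod $\Lambda_\ell$. Finally, since $\tor_i^S(I^t,k)_\mu=(B_i)_{\mu,t}$, properties (i)--(iii) for $B_i$ transcribe verbatim into (i)--(iii) for $\tor_i^S(I^t,k)_\mu$, with $Q_{i,j}:=$ the appropriate piece of $Q^{(\text{hom.\ degree})}_j$.

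The step I expect to be the main obstacle is the bookkeeping in translating the intrinsic "quasi-polynomial with respect to the lattice $\Lambda$" statement of Proposition \ref{z-graded}(iii) into the clean one-dimensional congruence $a_i t-\mu\equiv j\bmod(D)$ demanded by Theorem \ref{main res}(iii). One has to check that on the cone $\Omega_i$ (equivalently, between the half-lines $L_{i-1}$ and $L_i$ after reindexing), the coset of $(\mu,t)$ modulo $\Lambda$ is completely determined by the single residue $a_i t-\mu\bmod D$; this is where the special shape of the matrix $\binom{d_1\ \cdots\ d_n}{1\ \cdots\ 1}$ and Lemma \ref{chamber of n vector} are doing real work, because it is exactly this shape that forces $\Lambda_{\ell}$ (and hence the pertinent quotient) to be controlled by one linear functional transverse to the cone. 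I would also need to be slightly careful that the $b_j$ coming out of Lemma \ref{intersection} are integers (they are, being second coordinates determined by the integer data $\beta_2^j$ and the integer slopes $d_i$, as the determinant formula in the proof of Lemma \ref{intersection} makes explicit) and that $t_0$ may be chosen uniformly over the finitely many homological degrees $i$. Everything else is a direct transcription of the already-established Propositions \ref{hilbert bigraded} and \ref{z-graded}.
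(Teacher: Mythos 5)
Your proposal is correct and follows essentially the same route as the paper: the paper proves Theorem \ref{main res} simply by observing (following \cite{BCH}) that $\oplus_{t}\tor_i^S(I^t,k)$ is a finitely generated $\ZZ^2$-graded module over $R=k[T_1,\ldots,T_n]$ with $\deg T_j=(a_j,1)$, and then invoking Proposition \ref{z-graded}. You have in fact filled in a step the paper leaves tacit — the translation from ``quasi-polynomial with respect to $\Lambda$'' in Proposition \ref{z-graded}(iii) to the scalar congruence $a_it-\mu\equiv j\bmod D$ in the theorem, via the last sentence of Proposition \ref{hilbert bigraded} — and correctly noted that the threshold $t_0$ and the region data must be taken uniformly over the finitely many nonvanishing homological degrees.
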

\begin{proof}
We know from  \cite{BCH} that $M:=\tor_i^S(I^t, k)$
is a finitely generated $\ZZ^2$-graded module over $R$. Then it follows from Proposition \ref{z-graded}.
\end{proof}
Let $I=(f_1, \ldots, f_r)\subseteq S=k[x_0, \ldots, x_n]$ be a homogeneous complete intersection ideal. It is well known that minimal graded free resolution of $I$ and its power is again by koszul complex and Eagon-Northcott complex respectively, but by inspiration of above theorem  we can see that Betti table of powers of $I$ (for sufficiently large powers) encoded by finite number of numerical polynomials. In the following example we give the explicit formulas in the case the ideal is generated by three forms.
\begin{example}
Let $I=(f_1, f_2, f_3)$ be a complete intersection homogeneous ideal in the polynomial ring $S=k[x_0, \ldots, x_n]$ over a field $k$, where $\deg f_1=2$, $\deg f_2=3$ and $\deg f_3=6$. Let $R=S[T_1, T_2, T_3]$ be $\ZZ^2$-graded by setting $\deg_{\mathbb{Z}^2}(a)=(\deg a, 0)$ for any $a\in S$ and $\deg_{\mathbb{Z}^2}(T_1)=(2, 1)$,  $\deg_{\mathbb{Z}^2}(T_2)=(3, 1)$ and $\deg_{\mathbb{Z}^2}(T_3)=(6, 1)$.  Then a bigraded resolution of the Rees algebra of $I$, $\R_I$, over $R$ has the following form
$$0\rightarrow \begin{array}{c}R(-11, -2)\\
\oplus \\ R(-11, -1)\end{array}\xrightarrow{\left(\begin{array}{lll}
T_1 &T_2 &T_3\\
f_1 & f_2 & f_3
\end{array}\right)} \begin{array}{c}R(-9, -1)\\
\oplus  \\ R(-8, 1)\\
\oplus \\
R(-5, -1)\end{array}\xrightarrow{
\left(\begin{array}{l}
f_2T_3-f_3T_2\\
f_3T_1-f_1T_3\\
f_1T_2-f_2T_1
\end{array}\right)} R\rightarrow \R_I\rightarrow 0.$$
It follows that $\tor_0^S(\R_I, k)=B$, $\tor_2^S(\R_I, k)=B(-11, -1)$ and the minimal free $B$-resolution of $\tor_1^S(\R_I, k)$ is

$$0\rightarrow B(-11, -2)\xrightarrow{
\left(\begin{array}{lll}
T_1 &T_2 &T_3\\
\end{array}\right)} \begin{array}{c}B(-9, -1)\\
\oplus  \\ B(-8, 1)\\
\oplus \\
B(-5, -1)\end{array}\xrightarrow{}  \tor_1^S(\R_I, k) \rightarrow 0$$
where $B=k[T_1, T_2, T_3]$ is a non-standard graded polynomial ring over  $k$. To compute the numerical polynomials expresing the Hilbert function of $\tor$-modules of powers of $I$, we should first determine the Hilbert function of $B$. To this end, we first calculate the Hermite Normal form (HNF) of the matrix of degrees of $B$ which is
$$A=\left(\begin{array}{lll}
2&3&6\\
1&1&1\end{array}\right).$$
$$H:=HNF(A)=\left(\begin{array}{lll}
1&0&0\\
0&1&0\end{array}\right)$$ and $$U=\left(\begin{array}{lll}
-1&3&3\\
1&-2&-4\\
0&0&1\end{array}\right).$$
Here $U$ is a unimodular matrix such that $H=AU$. Now $H$ gives us a transformed  polytope $Q$, which is an interval in this case, such that the Hilbert function of $B$ at the point $(\mu, t)$ is equal to the number of lattice points on $Q$. The following inequalities give us the description of $Q$:
$$\left\{\begin{array}{r}
(\mu-3t)-3\lambda_1\leq 0,\\
(2t-\mu)+ 4\lambda_1\leq 0,\\
-\lambda_1\leq 0.\end{array}\right.$$
From Lemma \ref{chamber of n vector}, one considers two chambers $C_1$ and $C_2$ in $\ZZ^2$, defined as follows.
\begin{itemize}
\item $(\mu, t)\in C_1, \quad$ if $\left\{\begin{array}{l}
\mu-2t \geqslant 0\\
3t - \mu \geqslant 0\end{array}\right.$ \hspace{5mm}
 \item $(\mu, t)\in C_2, \quad$ if $\left\{\begin{array}{l}
6t-\mu \geqslant 0\\
\mu-3t \geqslant 0\end{array}\right.$.
\end{itemize}
The Hilbert function of $B$ written as
$$ H(B, (\mu, t))=\left\{\begin{array}{ll}
[\frac{\mu-2t}{4}] + 1&(\mu, t)\in C_1,\\\\
Ž[\frac{\mu-2t}{4}] - \frac{\mu-3t}{3} +1& (\mu, t)\in C_2    \hspace{5mm} and  \hspace{5mm} \frac{\mu-3t}{3} \in \ZZ,\\\\
Ž [\frac{\mu- 2t}{4}] - [\frac{\mu-3t}{3}]& (\mu, t)\in C_2 \hspace{5mm} and  \hspace{5mm} \frac{\mu-3t}{3} \notin \ZZ .
\end{array}\right.$$
\end{example}
The function $P(\mu,t)$ is given by polynomials depending on the value of  $\mu-2t$ mod $4$.
$$
P(\mu,t) = P_{i}(\mu,t)=\frac{\mu-2t}{4} - \frac{i}{4} + 1 \hspace{5mm} if \hspace{5mm}  \mu-2t \equiv i \mod 4$$

and the function $Q(\mu,t) $ is given by polynomials depending on the values of $\mu-3t$ mod $3$ and $\mu-2t$ mod $4$.

$$ Q(\mu,t)=\left\{\begin{array}{ll}
  \frac{6t-\mu +4j-3i}{12}  \hspace{5mm} if \hspace{5mm}  \mu-2t \equiv i \hspace{1mm},  \hspace{1mm} \mu-3t \equiv j \hspace{3mm} and  \hspace{3mm} \frac{\mu-3t}{3} \notin \ZZ,\\\\
   \frac{6t-\mu +4j-3i}{12} + 1  \hspace{5mm} if \hspace{5mm}  \mu-2t \equiv i \hspace{1mm},  \hspace{1mm} \mu-3t \equiv j \hspace{3mm} and  \hspace{3mm} \frac{\mu-3t}{3} \in \ZZ.
  \end{array}\right.$$
Now Hilbert function of the graded module $\tor_0^S(\R_I, k)$ can be written as

$$\beta_{0\mu}^{t}=\left\{\begin{array}{ll}
P(\mu,t)&2t\leqslant \mu \leqslant 3t,\\\\
Q(\mu,t)&3t< \mu \leqslant 6t\\\\
0 & Otherwise.
\end{array}\right.$$
For the Hilbert function of the graded module $\tor_1^S(\R_I, k)$ we need to write  new polynomials (see Figure  \ref{figure:Tor_1}):\\
$$
P_1 = P(\mu-5,t-1), \hspace{2mm} P_2 = P(\mu-8,t-1),  \hspace{2mm} P_3 = P(\mu-9,t-1), \hspace{2mm} P_4 = P(\mu-11,t-2), \hspace{2mm} P_5 = P(\mu-11,t-1).
$$
$$
Q_1 = Q(\mu-5,t-1), \hspace{2mm} Q_2 = Q(\mu-8,t-1),  \hspace{2mm} Q_3 = Q(\mu-9,t-1), \hspace{2mm} Q_4 = Q(\mu-11,t-2), \hspace{2mm} Q_5 = Q(\mu-11,t-1).
$$

\vspace{3mm}

$$\beta_{1\mu}^{t}=\left\{\begin{array}{ll}
P_1(\mu,t)&2t+3\leqslant \mu < 2t+6,\\\\
(P_1 +P_2)(\mu,t)&2t+6\leqslant \mu < 2t+7,\\\\
(P_1 +P_2 + P_3 - P_4)(\mu,t)&2t+7\leqslant \mu < 3t+2,\\\\
(Q_1 +P_2 + P_3 - P_4)(\mu,t)&3t+2\leqslant \mu < 3t+5,\\\\
(Q_1 +Q_2 + P_3 - P_4)(\mu,t)&3t+5\leqslant \mu < 3t+6,\\\\
(Q_1 +Q_2 + Q_3 - Q_4)(\mu,t)&3t+6\leqslant \mu < 6t-1,\\\\
(Q_2 +Q_2)(\mu,t)&6t-1\leqslant \mu < 6t+2,\\\\
Q_3(\mu,t)&6t+2< \mu \leqslant 6t+3,\\\\
0 & Otherwise.
\end{array}\right.$$
\begin{figure}
\begin{center}
\includegraphics[width=140mm]{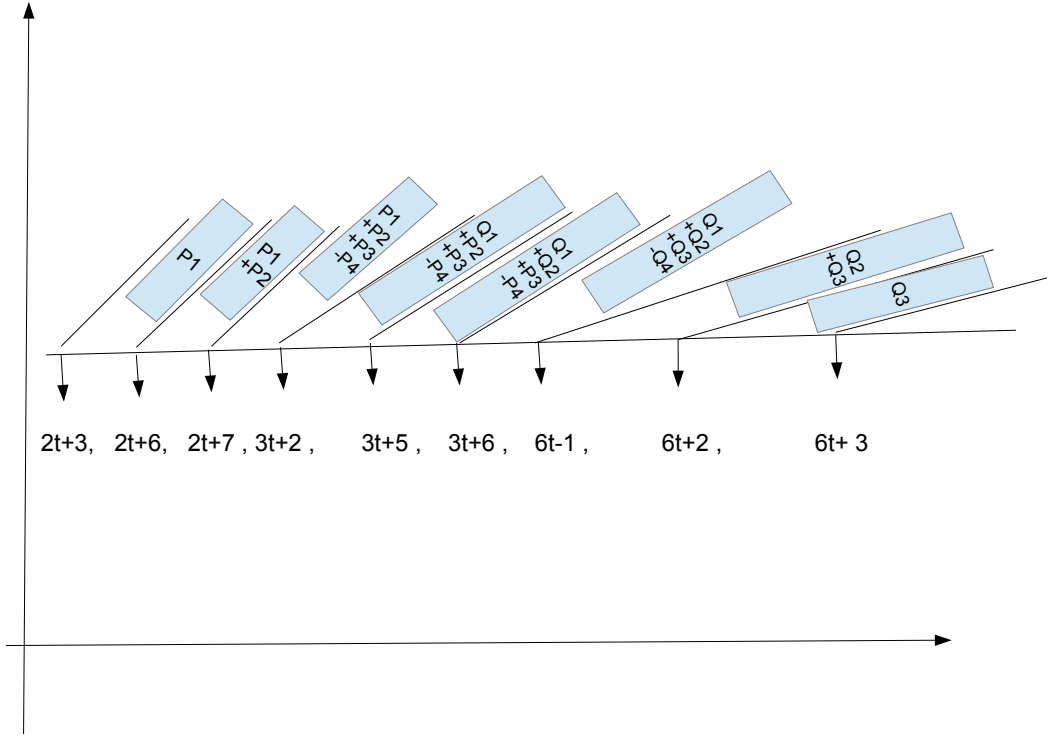}
\end{center}
  \caption{Regions  and corresponding polynomials of $\tor_1^S(\R_I, k)$.}
  \label{figure:Tor_1}
\end{figure}
Finally the Hilbert function of graded module $\tor_2^S(\R_I, k)$ can be written as

$$\beta_{2\mu}^{t}=\left\{\begin{array}{ll}
P_5(\mu,t)&2t+9\leqslant \mu < 3t+8,\\\\
Q_5(\mu,t)&3t+8\leqslant \mu \leqslant 6t+5,\\\\
0 & Otherwise.
\end{array}\right.$$

This example shows that, even in a very simple situation, quite many polynomials are involved to give a full discription of the Betti tables of powers of ideals.


\section{Acknowledgement}
This work was done as a part of the second author's Ph.D. thesis. The authors gratefully acknowledge
the support and help of Marc Chardin without whose knowledge and assistance, the present study
could not have been completed. Special thanks are due to Michele Vergne, Siamak Yassemi and Jean-Michel Kantor
 for their very useful mathematical discussions and valuable comments regarding this work.

\begin{multicols}{2}
Amir BAGHERI\\ School of Mathematics, Institute for\\   Research in Fundamental Sciences(IPM),\\  P.O. Box:19395-5746, Tehran, Iran.\\ Email: abagheri@ipm.ir\\ Marand technical college, University of\\  Tabriz, Tabriz, Iran\\
Email : a\_bageri@tabrizu.ac.ir

{Kamran LAMEI \\Institut de Math\'ematiques de Jussieu\\
UPMC,Boite th\'esard,\\4,place Jussieu,F-75252 Paris Cedex,\\France\\
Email : kamran.lamei@imj-prg.fr}
\end{multicols}

\end{document}